\newcommand{\defi}[1]{{\upshape\sffamily #1}}
\renewcommand{\a}{\alpha}
\renewcommand{\b}{\beta}
\renewcommand{\d}{\delta}
\newcommand{\bw}{\bigwedge}
\renewcommand{\det}{\textrm{det}}
\newcommand{\K}{\bb{K}}
\renewcommand{\ll}{\lambda}
\newcommand{\onto}{\twoheadrightarrow}
\newcommand{\oo}{\otimes}
\renewcommand{\t}{\underline{t}}
\newcommand{\x}{\underline{x}}
\newcommand{\y}{\underline{y}}
\newcommand{\z}{\underline{z}}
\newcommand{\rank}{\textrm{rank}}
\newcommand{\Ext}{\operatorname{Ext}}
\newcommand{\GL}{\operatorname{GL}}
\newcommand{\Spec}{\operatorname{Spec}}
\newcommand{\Sym}{\operatorname{Sym}}
\newcommand{\bb}[1]{\mathbb{#1}}
\newcommand{\ch}[1]{\chi_{#1}(z,w)}
\renewcommand{\rm}[1]{\textrm{#1}}
\newcommand{\mc}[1]{\mathcal{#1}}
\newcommand{\mf}[1]{\mathfrak{#1}}
\newcommand{\Qp}[2]{\mathcal{Q}_{#1}(#2)}
\newcommand{\tl}[1]{\tilde{#1}}
\newcommand{\ul}[1]{\underline{#1}}
\def\lra{\longrightarrow}
\newcommand*{\lhra}{\ensuremath{\lhook\joinrel\relbar\joinrel\rightarrow}}
\newtheorem{theorem}{Theorem}[section]
\newtheorem*{theorem*}{Theorem}
\newtheorem{lemma}[theorem]{Lemma}
\newtheorem{corollary}[theorem]{Corollary}
\newtheorem*{corollary*}{Corollary}
\newtheorem*{charext*}{Theorem on the Characters of Ext Modules}
\newtheorem*{ext*}{Theorem on the Growth of Ext Modules}
\newtheorem*{loccoh*}{Theorem on Local Cohomology with Support in Generic Determinantal Ideals}
\newtheorem*{regularity*}{Theorem on Regularity of Equivariant Ideals}
\theoremstyle{definition}
\newtheorem*{definition*}{Definition}
\theoremstyle{remark}
\newtheorem{remark}[theorem]{Remark}
\newtheorem*{remark*}{Remark}
\numberwithin{equation}{section}
\begin{document}

\title{Local cohomology with support in generic determinantal ideals}

\author{Claudiu Raicu}
\address{Department of Mathematics, Princeton University, Princeton, NJ 08544\newline
\indent Institute of Mathematics ``Simion Stoilow'' of the Romanian Academy}
\email{craicu@math.princeton.edu}

\author{Jerzy Weyman}
\address{Department of Mathematics, University of Connecticut, Storrs, CT 06269}
\email{jerzy.weyman@uconn.edu}

\subjclass[2010]{Primary 13D45, 14M12}

\date{\today}

\keywords{Local cohomology, determinantal ideals, regularity}

\begin{abstract} For positive integers $m\geq n\geq p$, we compute the $\GL_m\times\GL_n$--equivariant description of the local cohomology modules of the polynomial ring $S=\Sym(\bb{C}^m\oo\bb{C}^n)$ with support in the ideal of $p\times p$ minors of the generic $m\times n$ matrix. Our techniques allow us to explicitly compute all the modules $\Ext^{\bullet}_S(S/I_{\x},S)$, for $\x$ a partition and $I_{\x}$ the ideal generated by the irreducible sub-representation of $S$ indexed by $\x$. In particular we determine the regularity of the ideals $I_{\x}$, and we deduce that the only ones admitting a linear free resolution are the powers of the ideal of maximal minors of the generic matrix, as well as the products between such powers and the maximal ideal of $S$.
\end{abstract}

\dedicatory{To the memory of Andrei Zelevinsky}

\maketitle

\section{Introduction}\label{sec:intro}

Given positive integers $m\geq n$ and a field $\K$ of characteristic zero, we consider the space $\K^{m\times n}$ of $m\times n$ matrices, and the ring $S$ of polynomial functions on this space. For each $p=1,\cdots,n$ we define the ideal $I_p\subset S$ generated by the polynomial functions in $S$ that compute the $p\times p$ minors of the matrices in $\K^{m\times n}$. The goal of this paper is to describe for each $p$ the local cohomology modules $H^{\bullet}_{I_p}(S)$ of $S$ with support in the ideal $I_p$. The case $p=n$ was previously analyzed by the authors in joint work with Emily Witt \cite{RWW}. There is a natural action of the group $\GL_m\times\GL_n$ on $\K^{m\times n}$ and hence on $S$, and this action preserves each of the ideals $I_p$. This makes $H^{\bullet}_{I_p}(S)$ into $\GL_m\times\GL_n$--representations, and our results describe the characters of these representations explicitly. Our methods also allow us to determine explicitly the characters of all the modules $\Ext^{\bullet}_S(S/I,S)$, 
where $I$ is an ideal of $S$ generated by an irreducible $\GL_m\times\GL_n$--subrepresentation of $S$, and in particular determine the regularity of such ideals. It is an interesting problem to determine the minimal free resolutions of such ideals $I$, which is unfortunately only answered in a small number of cases. We hope that our results will help shed some light on this problem in the future.

We will adopt a basis-independent notation throughout the paper, writing $F$ (resp. $G$), for $\K$--vector spaces of dimension $m$ (resp. $n$), and thinking of $F^*\oo G^*$ as the space $\K^{m\times n}$ of $m\times n$ matrices, and of $S=\Sym(F\oo G)$ as the ring of polynomial functions on this space. $S$ is graded by degree, with the space of linear forms $F\oo G$ sitting in degree $1$. The \defi{Cauchy formula} \cite[Cor.~2.3.3]{weyman}
\begin{equation}\label{eq:cauchy}
S=\bigoplus_{\x=(x_1\geq\cdots\geq x_n\geq 0)}S_{\x}F\oo S_{\x}G 
\end{equation}
describes the decomposition of $S$ into a sum of irreducible $\GL(F)\times\GL(G)$--representations, indexed by partitions $\x$ with at most $n$ parts ($S_{\x}$ denotes the \defi{Schur functor} associated to $\x$). This decomposition respects the grading, the term corresponding to $\x$ being situated in degree $|\x|=x_1+\cdots+x_n$. We denote by $I_{\x}$ the ideal generated by $S_{\x}F\oo S_{\x}G$. If we write $(1^p)$ for the partition $\x$ with $x_1=\cdots=x_p=1$, $x_i=0$ for $i>0$, then $I_{(1^p)}$ is just another notation for the ideal $I_p$ of $p\times p$ minors. Our first result gives an explicit formula for the regularity of the ideals $I_{\x}$: 

\begin{regularity*}[Theorem~\ref{thm:regularity}]
 For a partition $\ul{x}$ with at most $n$ parts, letting $x_{n+1}=-1$ we have the following formula for the regularity of the ideal $I_{\x}$:
\[\rm{reg}(I_{\ul{x}})=\max_{\substack{p=1,\cdots,n \\ x_{p}>x_{p+1}}}(n\cdot x_p+(p-2)\cdot(n-p)).\]
In particular, the only ideals $I_{\x}$ which have a linear resolution are those for which $x_1=\cdots=x_n$ (i.e. powers $I_n^{x_1}$ of the ideal $I_n$ of maximal minors) or $x_1-1=x_2=\cdots=x_n$ (i.e. $I_n^{x_1-1}\cdot I_1$).
\end{regularity*}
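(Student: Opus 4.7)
The plan is to derive the regularity formula from the Theorem on the Characters of Ext Modules via Grothendieck local duality, and then to extract the linear-resolution classification by a short combinatorial argument.

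In the polynomial ring $S$ of Krull dimension $N = mn$, graded local duality gives the standard identity
\[\rm{reg}(I_{\underline{x}}) \;=\; \rm{reg}(S/I_{\underline{x}}) + 1 \;=\; 1 + \max_j\bigl\{-j - \min\{k : \Ext^j_S(S/I_{\underline{x}}, S)_k \neq 0\}\bigr\}.\]
The Theorem on the Characters of Ext Modules, announced earlier in the introduction, provides a fully explicit $\GL(F)\times\GL(G)$-equivariant description of each $\Ext^j_S(S/I_{\underline{x}}, S)$: its irreducible summands $S_\lambda F \otimes S_\mu G$ are indexed by a combinatorial set attached to $\underline{x}$, and the formula records the degree of each summand. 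From this description one reads off, for every $j$, the minimal $k$ with $\Ext^j_S(S/I_{\underline{x}}, S)_k \neq 0$. The task is then to show that for each $p \in \{1,\ldots,n\}$ with $x_p > x_{p+1}$ (using $x_{n+1} = -1$), there is a distinguished summand for which $-j - \min_k$ equals $nx_p + (p-2)(n-p) - 1$, and that no other summand exceeds this value; maximizing over the strict descents and adding $1$ yields the claimed formula.

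For the linear-resolution corollary, since $I_{\underline{x}}$ is generated in degree $|\underline{x}|$, a linear resolution is equivalent to $\rm{reg}(I_{\underline{x}}) = |\underline{x}|$. Write $\underline{x}$ in multiplicity notation as $(a_1^{m_1},\ldots,a_r^{m_r})$ with $a_1 > \cdots > a_r \geq 0$. If $r = 1$ then only the descent $p = n$ survives and $\rm{reg}(I_{\underline{x}}) = na_1 = |\underline{x}|$ trivially, giving $I_n^{a_1}$. For $r \geq 2$, plugging the regularity formula at the first strict descent $p = m_1$ and expanding telescopes to
\[nx_{m_1} + (m_1 - 2)(n - m_1) - |\underline{x}| \;=\; \sum_{i \geq 2} m_i\bigl(a_1 - a_i + m_1 - 2\bigr).\]
The $i = 2$ summand $m_2(a_1 - a_2 + m_1 - 2) \geq 0$ vanishes precisely when $m_1 = 1$ and $a_1 - a_2 = 1$; any $i \geq 3$ summand contributes at least $m_i m_1 \geq 1$ because $a_1 - a_i \geq 2$. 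Hence the right side is strictly positive except in the single case $r = 2$, $m_1 = 1$, $a_1 = a_2 + 1$, where $\underline{x} = (a, (a-1)^{n-1})$ and $I_{\underline{x}} = I_n^{a-1} \cdot I_1$. A direct check confirms that in this case the remaining strict descent at $p = n$ gives a strictly smaller contribution, so indeed $\rm{reg}(I_{\underline{x}}) = |\underline{x}|$.

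The main obstacle is the Ext-side step: pinpointing the bottom-degree summand in each $\Ext^j_S(S/I_{\underline{x}}, S)$ from the character formula and verifying that it is always indexed by one of the ``corners'' of $\underline{x}$ (indices with $x_p > x_{p+1}$), rather than by some more exotic combinatorial datum. Once this tight control on the character formula is established, both the regularity identity and the classification of linear resolutions reduce to the short manipulations sketched above.
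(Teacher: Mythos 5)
Your overall strategy is the right one, and it matches the paper's: invoke the Ext-characterization of regularity (the paper cites Eisenbud's $\rm{reg}(M)=\max\{-r-j:\Ext^j_S(M,S)_r\neq 0\}$ rather than phrasing it as local duality, but these are the same thing), feed in the character formula for $\Ext^{\bullet}_S(S/I_{\x},S)$, and then classify linear resolutions by comparing $\rm{reg}(I_{\x})$ to the generating degree $|\x|$. Your telescoping computation for the linear-resolution classification is sound and is essentially a restatement of the paper's argument (your $m_1$ is the paper's ``minimal $p$ with $x_p>x_{p+1}$'', and the inequality $\sum_{i\geq 2}m_i(a_1-a_i+m_1-2)\geq 0$ with equality analysis is the same bound), though you do owe the reader the verification that in the two special cases no other corner gives a larger term (you flag this as ``a direct check'', which is fine and easy).

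However, there is a genuine gap: the entire derivation of the regularity formula from the Ext character is left undone. You write ``The task is then to show\dots there is a distinguished summand for which $-j-\min_k$ equals $n x_p+(p-2)(n-p)-1$, and that no other summand exceeds this value'' and then later concede this is ``the main obstacle\dots Once this tight control\dots is established, both the regularity identity and the classification\dots reduce to the short manipulations sketched above.'' That deferred step is where all the work lives. Concretely, one must: (i) decide exactly for which tuples $(p,\t,s)$ the index set $W'(\x,p;\t,s)$ is non-empty and (ii) find the weight $\ll\in W'(\x,p;\t,s)$ of minimal $|\ll|$; this is the paper's Lemma~\ref{lem:nonemptyW'}, which shows non-emptiness happens iff $x_p-x_{n+1-j}\geq p-t_j$ and $s\geq p-x_p$, with the minimizer being $\ll_1=\cdots=\ll_s=s-n$, $\ll_{s+1}=\cdots=\ll_n=p-x_p-m$. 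One must then (iii) carry out the maximization over $p$, $\t$, $s$: the sum $\sum t_j$ is maximized by taking every $t_j=p-1$ (permissible precisely when $x_p>x_{p+1}$, which is how the ``corners'' emerge — for $p<n$ with $x_p=x_{p+1}$ the constraints on $t_{n-p}$ are contradictory and $W'$ is empty), and $s(p-x_p-s)\leq 0$ is maximized at $s=\max(0,p-x_p)$, with the edge case $x_n=0$, $p=n$ requiring a separate word. None of this optimization is sketched in your proposal, and it is not a routine formality: the observation that the maximization over $\t$ kills all $p$ with $x_p=x_{p+1}$ is exactly why the formula is a maximum over corners rather than over all $p$. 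Until that analysis is supplied, the proposal is a plan rather than a proof.
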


The theorem above is a consequence of the explicit description of the modules $\Ext^{\bullet}_S(S/I_{\x},S)$ that we obtain in Theorem~\ref{thm:ExtIx}. This description is somewhat involved, so we avoid stating it for the moment. A key point is that the modules $\Ext^{\bullet}_S(S/I_{\x},S)$ \emph{grow} as we append new columns to the \emph{end} of the partition $\x$. More precisely, we can identify a partition $\x$ with its pictorial realization as a \defi{Young diagram} consisting of left-justified rows of boxes, with $x_i$ boxes in the $i$-th row: for example, $\x=(5,5,5,3)$ corresponds to
\[\Yvcentermath1\yng(5,5,5,3),\]
and adding two columns of size $2$ and three columns of size $1$ to the end of $\x$ yields $\y=(10,7,5,3)$.

\begin{ext*}[Theorem~\ref{thm:injectivityExt}]
 Let $d\geq 0$ and consider partitions $\x,\y$, where $\x$ consists of the first $d$ columns of $\y$, i.e. $x_i=\min(y_i,d)$ for all $i=1,\cdots,n$. The natural quotient map $S/I_{\y}\onto S/I_{\x}$ induces injective maps
\[\Ext^i_S(S/I_{\x},S)\lhra\Ext^i_S(S/I_{\y},S),\]
for all $i=0,1,\cdots,m\cdot n$. 
\end{ext*}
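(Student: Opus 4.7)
The plan is to exploit the long exact sequence of Ext modules attached to the short exact sequence
\[
0 \lra I_{\ul{x}}/I_{\ul{y}} \lra S/I_{\ul{y}} \lra S/I_{\ul{x}} \lra 0,
\]
which produces connecting maps $\delta_i : \Ext^{i-1}_S(I_{\ul{x}}/I_{\ul{y}},S) \to \Ext^i_S(S/I_{\ul{x}},S)$ whose vanishing is equivalent to the desired injectivity for every $i$. To simplify the combinatorics, I would first reduce to the case of a single-column extension. Taking $\ul{y}^{(k)}$ to be the partition consisting of the first $d+k$ columns of $\ul{y}$ yields an ascending chain $\ul{x} = \ul{y}^{(0)} \subset \ul{y}^{(1)} \subset \cdots \subset \ul{y}^{(s)} = \ul{y}$ of valid partitions (weak monotonicity is preserved by $\min(\,\cdot\,,d+k)$); each surjection $S/I_{\ul{y}^{(k+1)}} \onto S/I_{\ul{y}^{(k)}}$ induces a map on Ext, and these compose to the map in the statement. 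Since a composition of injections is an injection, it suffices to handle the case where $\ul{y}$ is obtained from $\ul{x}$ by appending one column, say of height $q$ at position $d+1$.

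Fix this single-column setup. The key tool is the explicit character formula of Theorem~\ref{thm:ExtIx}: I would use it to write out the $\GL(F)\times\GL(G)$-isotypic decomposition of $\Ext^i_S(S/I_{\ul{x}},S)$, and through a further filtration of $I_{\ul{x}}/I_{\ul{y}}$ by $\GL(F)\times\GL(G)$-equivariant $S$-submodules with successive quotients of the form $I_{\ul{z}}/I_{\ul{z}'}$---each amenable to the same inductive treatment---bound the constituents of $\Ext^{i-1}_S(I_{\ul{x}}/I_{\ul{y}},S)$. The goal is to show that these two isotypic decompositions share no common irreducible Schur constituent, which, because $\delta_i$ is a $\GL(F)\times\GL(G)$-equivariant $S$-module map, forces $\delta_i = 0$ by Schur's lemma.

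The crucial structural input driving the disjointness is that the appended column lies strictly to the right of every column of $\ul{x}$: this should manifest as a separation in the shape of the partitions indexing Schur constituents---perhaps a distinguished column break, or a numerical invariant tracked by Theorem~\ref{thm:ExtIx}---cleanly distinguishing the two Ext modules. The main obstacle is executing this combinatorial check without circularity: the filtration of $I_{\ul{x}}/I_{\ul{y}}$ must be fine enough for Theorem~\ref{thm:ExtIx} to apply to each graded piece, yet coarse enough that the bookkeeping remains tractable. Should a clean disjointness statement prove elusive, a fallback would be to construct a compatible pair of $\GL(F)\times\GL(G)$-equivariant free resolutions of $S/I_{\ul{x}}$ and $S/I_{\ul{y}}$ and exhibit an explicit lift of each Ext class along the quotient map, using the same equivariant formulas that produce the characters in Theorem~\ref{thm:ExtIx}.
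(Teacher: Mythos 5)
Your plan contains a fatal circularity: you propose to deduce the required disjointness of Schur constituents from the explicit character formula of Theorem~\ref{thm:ExtIx}, but in the paper Theorem~\ref{thm:ExtIx} is itself proved \emph{from} Theorem~\ref{thm:injectivityExt} (via the character identity $\ch{\Ext^{\bullet}_S(S/I_{\y},S)}=\ch{\Ext^{\bullet}_S(S/I_{\x},S)}+\ch{\Ext^{\bullet}_S(I_{\x}/I_{\y},S)}$ in the proof of Theorem~\ref{thm:ExtIx}, which is exactly the splitting you are trying to establish). So you cannot invoke the character formula for $\Ext^i_S(S/I_{\x},S)$ at this stage. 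What \emph{is} available before the injectivity theorem is the character of $\Ext^{\bullet}_S(J_{\z,p},S)$ for any fixed $p$ (Theorem~\ref{thm:Rpi*Mxp}) together with the parity argument of Corollary~\ref{cor:filtrationJxp}, but that argument only controls connecting maps between modules filtered by $J_{\z,p}$ for \emph{one and the same} $p$: it gives constituents concentrated in degrees of a fixed residue mod $2$ (for each $s$), forcing adjacent-degree connecting maps to vanish. In contrast, $S/I_{\x}$ involves $J_{\z,p}$-pieces across \emph{all} values of $p$, so no clean mod-$2$ separation between $\Ext^{i-1}_S(I_{\x}/I_{\y},S)$ and $\Ext^i_S(S/I_{\x},S)$ is available, and your claimed ``distinguished column break'' disjointness is neither established nor obviously true (one can match the cohomological degrees across different $p$'s). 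The fallback of explicit equivariant free resolutions is worse: as the introduction of the paper notes, these resolutions are only known in a few special cases.

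A secondary point: your reduction to a single-column extension, while logically valid as a reduction (compositions of injections are injections), is not how the paper proceeds and would not give you what you need. The paper reduces instead to the case where \emph{all} appended columns have the same height $p+1$ (so $\y=\x+(a^{p+1})$), and then runs a descending induction on $p$. The base case $p=n-1$ (rectangles) is Corollary~\ref{cor:injectivityExtrectangles}, which does come from Lemma~\ref{lem:filtrationaddp+1} and the parity Corollary~\ref{cor:filtrationJxp}. The inductive step for $p<n-1$ is a diagram chase: one introduces the partition $\z$ consisting of the columns of $\x$ of size $>p+1$ and the auxiliary partitions $\tl{\x},\tl{\y}$ which widen $\x,\y$ to have $(p+2)$-row blocks, so that the induction hypothesis at level $p+1$ applies to $\Ext^i_S(S/I_{\z},S)\hookrightarrow\Ext^i_S(S/I_{\tl{\x}},S)$ and similarly for $\tl{\y}$; then one compares the two long exact sequences coming from $I_{\y}\subset I_{\x}\subset I_{\z}$ and uses Corollary~\ref{cor:injectivityExtrectangles} to get injectivity on the quotient pieces, which forces injectivity in the middle by the five lemma. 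The key structural insight you are missing is that one should never try to control $\Ext^{\bullet}_S(S/I_{\x},S)$ directly (it mixes all $p$'s); instead one strips off the large-column part $\z$ and reduces everything to comparisons between modules filtered by $J_{-,p}$ for a single $p$ at a time, where the parity argument actually applies.
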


We warn the reader that the naive generalization of the statement above fails: if $\y$ is a partition containing $\x$ (i.e. $y_i\geq x_i$ for all $i$), then it is not always the case that the induced maps $\Ext^i_S(S/I_{\x},S)\to\Ext^i_S(S/I_{\y},S)$ are injective. In fact, a general partition $\x$ has the property that most modules $\Ext^i_S(S/I_{\x},S)$ are non-zero, but it is always contained in some partition $\y$ with $y_1=\cdots=y_n$; for such a $\y$, all but $n$ of the modules $\Ext^i_S(S/I_{\y},S)$ will vanish.

We next give the explicit description of $\Ext^{\bullet}_S(S/I_{\x},S)$, which requires some piece of notation. We write $\mf{R}$ for the representation ring of the group $\GL(F)\times\GL(G)$. Given a $\bb{Z}$--graded $S$--module $M$, admitting an action of $\GL(F)\times\GL(G)$ compatible with the natural one on $S$, we define its \defi{character} $\chi_M(z)$ to be the element in the Laurent power series ring $\mf{R}((z))$ given by
\[\chi_M(z)=\sum_{i\in\bb{Z}}[M_i]\cdot z^i,\]
where $[M_i]$ denotes the class in $\mf{R}$ of the $\GL(F)\times\GL(G)$--representation $M_i$. We will often work with doubly-graded modules $M_i^j$, where the second grading (in $j$) is a cohomological one, and $M_{\bullet}^j\neq 0$ only for finitely many values of $j$: for us they will be either $\Ext$ modules, or local cohomology modules. We define the character of such $M$ to be the element $\chi_M(z,w)\in\mf{R}((z))[w^{\pm 1}]$ given by
\[\chi_M(z,w)=\sum_{i,j\in\bb{Z}}[M_i^j]\cdot z^i\cdot w^j.\]

We will refer to an $r$--tuple $\ll=(\ll_1,\cdots,\ll_r)\in\bb{Z}^r$ (for $r=m$ or $n$) as a \defi{weight}. We say that $\ll$ is dominant if $\ll_1\geq\ll_2\geq\cdots\geq\ll_r$, and denote by $\bb{Z}^r_{dom}$ the set of dominant weights. Note that a partition is just a dominant weight with non-negative entries. We will usually use the notation $\x,\y,\z$ etc. to refer to partitions indexing the subrepresentations of $S$, and $\ll,\mu$ etc. to denote the weights describing the characters of other equivariant modules ($\Ext$ modules or local cohomology modules).

For $\ll\in\bb{Z}^n_{dom}$ and $0\leq s\leq n$, we define (note that in \cite{RWW} this was called $\ll(n-s)$)
\begin{equation}\label{eq:lls}
\ll(s)=(\ll_1,\cdots,\ll_s,\underbrace{s-n,\cdots,s-n}_{m-n},\ll_{s+1}+(m-n),\cdots,\ll_n+(m-n))\in\bb{Z}^m.
\end{equation}

\begin{charext*}[Theorem~\ref{thm:ExtIx}]
 With the above notation, the character of the doubly--graded module $\Ext^{\bullet}_S(S/I_{\x},S)$ is given by
\[
\ch{\Ext^{\bullet}_S(S/I_{\x},S)}=\sum_{\substack{1\leq p\leq n \\ 0\leq s\leq t_1\leq\cdots\leq t_{n-p}\leq p-1 \\ \ll\in W'(\x,p;\t,s)}}[S_{\ll(s)}F\oo S_{\ll}G]\cdot z^{|\ll|}\cdot w^{m\cdot n+1-p^2-s\cdot(m-n)-2\cdot\left(\sum_{j=1}^{n-p}t_j\right)}, 
\]
where $W'(\x,p;\t,s)$ is the set of dominant weights $\ll\in\bb{Z}^n$ satisfying
\[\begin{cases}
 \ll_n\geq p-x_p-m, & \\
 \ll_{t_j+j}\leq t_j-x_{n+1-j}-m, \rm{ for } j=1,\cdots,n-p, &\\
 \ll_s\geq s-n\textrm{ and }\ll_{s+1}\leq s-m.
 \end{cases}
\]
\end{charext*}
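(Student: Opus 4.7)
The plan is to derive the character formula by exploiting the relationship between $\Ext^{\bullet}_S(S/I_{\x},S)$ and the local cohomology modules $H^{\bullet}_{I_p}(S)$ (whose characters are computed elsewhere in the paper), together with the geometric technique of Kempf--Weyman from \cite{weyman}. The outer sum over $p$ in the formula suggests a decomposition according to rank strata of $\Spec(S)$: each $p$ contributes the portion of $\Ext$ supported on (the closure of) the stratum of matrices of rank $p-1$, so my first task would be to construct an equivariant filtration or spectral sequence on $S/I_{\x}$ whose successive quotients have well-understood local cohomology.

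A natural geometric setup is the following. For each $p$, consider the incidence variety $Z_p\subset\Hom(G,F)\times\mathrm{Gr}(p,G)$ parameterizing pairs $(\phi,V)$ with $\phi|_V=0$. This provides a Springer-like resolution of the rank locus, and the Kempf--Weyman geometric technique produces an equivariant free resolution of the corresponding determinantal module. Pushing forward to the Grassmannian factor and applying Borel--Weil--Bott fiberwise yields the Schur functors $S_{\ll(s)}F\oo S_{\ll}G$ appearing in the formula. The conditions $\ll_s\geq s-n$ and $\ll_{s+1}\leq s-m$ are exactly those making $\ll(s)\in\bb{Z}^m$ dominant, so $s$ is interpreted as the position of a Bott reflection on the $F$-Grassmannian side, and the cohomological shift $-s(m-n)$ matches the codimension of the corresponding stratum.

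The additional parameters $t_1\leq\cdots\leq t_{n-p}$ and the associated constraints $\ll_{t_j+j}\leq t_j-x_{n+1-j}-m$ encode an iterated Bott sequence on the $G$-side whose complexity grows with the partition $\x$. A natural way to organize this is via induction on $\x$ using the Growth Theorem stated just above: adding a column to $\x$ increments some $x_{n+1-j}$, hence opens up a new range of admissible weights $\ll$ in $W'(\x,p;\t,s)$. The base case $\x=(1^p)$ corresponds to the ideal $I_p$ of $p\times p$ minors, which is handled earlier in the paper, generalizing the maximal-minors case $p=n$ treated in \cite{RWW}. The inductive step should then rely on the long exact sequence coming from $0\to I_{\y}/I_{\x}\to S/I_{\x}\to S/I_{\y}\to 0$, combined with the injectivity supplied by the Growth Theorem to isolate the new contribution.

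The main obstacle I anticipate is the precise bookkeeping of the Bott reflections for general $\x$: the conditions defining $W'(\x,p;\t,s)$ intertwine constraints from the $F$-side (through $s$) and the $G$-side (through $\t$), and verifying that the combinatorial prediction matches the actual cohomology requires tracking multiple iterated reflections whose admissibility depends subtly on the partition. An alternative route, perhaps cleaner, would be to treat both sides simultaneously via a single ``staircase'' partial flag variety adapted to $\x$, and then apply Bott's theorem globally with the combinatorial data $(p,s,\t,\ll)$ parametrizing the Weyl-regular weights directly; the cohomological weight $m\cdot n+1-p^2-s(m-n)-2\sum t_j$ would then emerge as the length of a single Weyl group element.
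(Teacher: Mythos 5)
Your broad strategy—induction on the partition using the Growth Theorem to split off a new contribution, plus geometry and Bott's theorem to compute that contribution—is in fact the skeleton of the paper's proof, but the proposal is missing the central technical device and glosses over two points that would be genuine obstacles.

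The key ingredient you do not identify is the family of subquotients $J_{\x,p}=I_{\x}/I_{\rm{Succ}(\x,p)}$. The paper does not attempt a direct geometric computation of $\Ext^\bullet_S(I_{\x}/I_{\y},S)$ (nor of $S/I_{\x}$); instead, Lemma~\ref{lem:filtrationaddp+1} filters $I_{\x}/I_{\y}$ by modules $J_{\z,q}$, and each $J_{\z,q}$ is realized as $H^0$ of a line-bundle twist of $\Sym(\Qp{q}{F}\oo\Qp{q}{G})$ on the product of partial flag varieties $Flag([q,n];F)\times Flag([q,n];G)$ (Lemma~\ref{lem:Rpi*Mxp}). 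The correct geometric model is an iterated sequence of projective bundles over a Grassmannian (carrying the parameters $t_1\leq\cdots\leq t_{n-q}$ and $s$), not a single incidence variety over $\mathrm{Gr}(p,G)$; the partition $\x$ enters only through a line bundle twist, and the cohomological index $m\cdot n-p^2-s(m-n)-2\sum t_j$ accumulates across the stages of an iterated pushforward rather than arising from a single Weyl group element. Without a structure like $J_{\z,q}$ there is no clean object to which the duality and Bott computations apply.

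Second, even granting the filtration, one must show that the spectral sequence degenerates: both the Growth Theorem's injectivity and the additivity $\ch{\Ext(B)}=\ch{\Ext(A)}+\ch{\Ext(C)}$ for filtered $B$ require vanishing of all connecting maps. In the paper this is Corollary~\ref{cor:filtrationJxp}, proved by a parity argument: for a fixed $p$ and $s$, the cohomological degrees in which a given irreducible $S_{\ll(s)}F\oo S_{\ll}G$ can appear in $\Ext^\bullet_S(J_{\z,p},S)$ all have the same parity, so no connecting homomorphism can be nonzero. Your proposal implicitly assumes this degeneration without supplying the mechanism. Two smaller slips: the dependency with local cohomology is backwards (the $H^\bullet_{I_p}(S)$ computation is a corollary of this theorem, not an input), and your short exact sequence should read $0\to I_{\x}/I_{\y}\to S/I_{\y}\to S/I_{\x}\to 0$ since $\x\subset\y$ implies $I_{\y}\subset I_{\x}$; relatedly the induction base is $\x=\ul 0$, not $\x=(1^p)$.
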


Our proof of this theorem starts with the observation in \cite{deconcini-eisenbud-procesi} that even though the algebraic set defined by $I_{\x}$ is somewhat simple (it is the set of matrices of rank smaller than the number of non-zero parts of $\x$), its scheme theoretic structure is more complicated: it is generally non-reduced, and has embedded components supported on $I_p$ for each size $p$ of some column of $\x$. Our approach is then to filter $S/I_{\x}$ with subquotients $J_{\z,p}$ (defined in Section~\ref{subsec:IxJxp}) whose scheme theoretic support is the (reduced) space of matrices of rank at most $p$, hence they are less singular and easier to resolve. In fact, each $J_{\z,p}$ is the push-forward of a locally free sheaf on some product of flag varieties, which allows us to compute $\Ext^{\bullet}_S(J_{\z,p},S)$ via duality theory. Solving the extension problem to deduce the formulas for $\Ext^{\bullet}_S(S/I_{\x},S)$ turns out to be then trivial, due to the restrictions imposed by the equivariant 
structure of the modules.

We end this introduction with our main theorem on local cohomology modules, whose statement needs some more notation. For $0\leq s\leq n$, we define (with the convention $\ll_0=\infty$, $\ll_{n+1}=-\infty$)
\begin{equation}\label{eq:defhs}
 h_s(z)=\sum_{\substack{\ll\in\bb{Z}^n_{dom} \\ \ll_s\geq s-n \\ \ll_{s+1}\leq s-m}} [S_{\ll(s)}F\oo S_{\ll}G]\cdot z^{|\ll|},
\end{equation}
so that $h_n(z)$ is just the character of $S$. The other $h_s(z)$'s are characters of local cohomology modules (in the case when $m>n$). More precisely, for $p=1,\cdots,n$ we write $H_p(z,w)$ for the character of the doubly-graded module $H^{\bullet}_{I_p}(S)$. In \cite{RWW} we proved that for $m>n$
\[H_n(z,w)=\sum_{s=0}^{n-1}h_s(z)\cdot w^{(n-s)\cdot(m-n)+1},\]
and it is easy to see that the same formula holds for $m=n$ (in this case, the only non-zero local cohomology module is $H^1_{I_n}(S)=S_{\rm{det}}/S$, where $\rm{det}$ denotes the determinant of the generic $n\times n$ matrix, and $S_{det}$ is the localization of $S$ at $\rm{det}$).

We write $p(a,b;c)$ for the number of partitions of $c$ contained in an $a\times b$ rectangle, and define the \defi{Gauss polynomial} ${a+b \choose b}(w)$ to be the generating function for the sequence $p(a,b;c)_{c\geq 0}$:
\begin{equation}\label{eq:gauss}
{a+b \choose a}(w)=\sum_{c\geq 0}p(a,b;c)\cdot w^c=\sum_{b\geq t_1\geq t_2\geq\cdots\geq t_a\geq 0}w^{t_1+\cdots+t_a}. 
\end{equation}
Gauss polynomials have previously appeared in \cite{akin-weyman} in connection to the closely related problem of understanding the minimal free resolutions of the ideals $I_{(p^d)}$.

\begin{loccoh*}[Theorem~\ref{thm:loccohdetl}]
With the above notation, we have for each $p=1,\cdots,n,$
  \[H_p(z,w) = \sum_{s=0}^{p-1} h_s(z)\cdot w^{(n-p+1)^2+(n-s)\cdot(m-n)}\cdot{n-s-1 \choose p-s-1}(w^2).\]
\end{loccoh*}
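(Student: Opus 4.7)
The plan is to compute $H^{\bullet}_{I_p}(S)$ as a direct limit of $\Ext$ modules. I would first write
\[H^i_{I_p}(S)=\varinjlim_k\Ext^i_S(S/I_{(k^p)},S),\]
exploiting that $\{I_{(k^p)}\}_k$ is cofinal with the sequence of powers $\{I_p^k\}_k$: the inclusion $I_{(k^p)}\subseteq I_p^k$ is a consequence of Pieri's rule, which places $S_{(k^p)}$ as a summand of $(\Lambda^p F)^{\oo k}=(S_{(1^p)}F)^{\oo k}$, while the reverse inclusion $I_p^N\subseteq I_{(k^p)}$ for some $N$ follows from Noetherianity since $I_{(k^p)}$ has radical $I_p$. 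Applying the Theorem on the Growth of Ext Modules with $\x=(k^p)$ and $\y=((k+1)^p)$ (the former being the partition of the first $k$ columns of the latter) yields injective transition maps. Thus multiplicities of irreducibles are non-decreasing in $k$, and the character of $H^i_{I_p}(S)$ is the pointwise limit of the characters of the $\Ext^i_S(S/I_{(k^p)},S)$.

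The core step is to take this limit inside the formula of the Theorem on the Characters of Ext Modules, specialized to $\x=(k^p)$. I would analyze the set $W'((k^p),p';\t,s)$ for each summation index $p'\in\{1,\ldots,n\}$, distinguishing three cases. When $p'<p$ one has $x_{p'+1}=k$, so the index $j=n-p'$ produces the constraint $\ll_{t_{n-p'}+(n-p')}\leq t_{n-p'}-k-m$, which fails for any fixed $\ll$ once $k$ is large; these tuples disappear from the large-$k$ character. When $p'>p$ one has $x_{p'}=0$, forcing $\ll_n\geq p'-m$, while the constraint $\ll_{s+1}\leq s-m$ together with dominance gives $\ll_n\leq s-m$; since $s\leq p'-1<p'$, these two bounds are incompatible for every $k$, so these terms vanish outright. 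Only $p'=p$ survives: the lower bound $\ll_n\geq p-k-m$ is vacuous in the limit, and the remaining upper constraints $\ll_{t_j+j}\leq t_j-m$ are automatically implied by $\ll_{s+1}\leq s-m$ combined with $t_j+j\geq s+1$ and $t_j\geq s$, leaving precisely the $\ll$'s defining $h_s(z)$.

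The final step is to collapse the $\t$-sum into a Gauss polynomial. With only $p'=p$ surviving, the character becomes
\[\sum_{s=0}^{p-1}h_s(z)\cdot w^{mn+1-p^2-s(m-n)}\sum_{s\leq t_1\leq\cdots\leq t_{n-p}\leq p-1}w^{-2\sum t_j}.\]
The substitution $u_j=t_{n-p+1-j}-s$ rewrites the inner $\t$-sum as $w^{-2s(n-p)}\cdot{n-1-s\choose n-p}(w^{-2})$, and the identities ${a+b\choose a}(w^{-1})=w^{-ab}{a+b\choose a}(w)$ together with ${a+b\choose a}={a+b\choose b}$ recover the claimed factor ${n-s-1\choose p-s-1}(w^2)$. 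A routine exponent computation identifies the remaining power of $w$ as $(n-p+1)^2+(n-s)(m-n)$, matching the stated formula. The main obstacle I expect is the vanishing for $p'>p$: it is a priori surprising that the Ext sum, ranging over all $p'\in\{1,\ldots,n\}$, collapses to the single index $p'=p$ in the local cohomology, and this collapse depends on a delicate interplay between dominance and the $h_s$-type constraints.
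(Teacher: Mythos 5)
Your overall strategy coincides with the paper's: compute $H^\bullet_{I_p}(S)$ as a direct limit of $\Ext^\bullet_S(S/I_{(k^p)},S)$ over the cofinal system $\{I_{(k^p)}\}$, use the Theorem on the Growth of Ext Modules to pass the character to the limit, specialize the Theorem on the Characters of Ext Modules, and collapse the $\t$-sum into a Gauss polynomial (your substitution $u_j=t_{n-p+1-j}-s$ is a mild variant of the paper's $t'_j=p-1-t_j$, and the exponent bookkeeping is correct).

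There is, however, a genuine flaw in your treatment of the case $p'<p$. You invoke only constraint (\ref{eq:restrllIx:b}) at $j=n-p'$ to argue that for a fixed $\ll$ the condition $\ll_{t_{n-p'}+(n-p')}\leq t_{n-p'}-k-m$ fails once $k$ is large, and conclude that ``these tuples disappear from the large-$k$ character.'' That inference is incompatible with the very injectivity you invoked a sentence earlier: since the transition maps $\Ext^i_S(S/I_{(k^p)},S)\hookrightarrow\Ext^i_S(S/I_{((k+1)^p)},S)$ are injective, the multiplicity of any fixed irreducible is non-decreasing in $k$, so nothing can ``disappear'' from the limit; if a term had contributed at some finite level, you would be forced to account for it in the limit. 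The correct and stronger statement is that $W'\bigl((k^p),p';\t,s\bigr)$ is \emph{empty for every} $k$ when $p'<p$, not merely in the limit. This follows by also using constraint (\ref{eq:restrllIx:a}): from (\ref{eq:restrllIx:a}) one has $\ll_n\geq p'-k-m$, from (\ref{eq:restrllIx:b}) at $j=n-p'$ one has $\ll_{t_{n-p'}+n-p'}\leq t_{n-p'}-k-m$, and dominance gives $\ll_{t_{n-p'}+n-p'}\geq\ll_n$; hence $t_{n-p'}\geq p'$, contradicting $t_{n-p'}\leq p'-1$. (This is precisely what Remark~\ref{rem:xp>xp+1} records: a nontrivial contribution at index $p'$ forces $x_{p'}>x_{p'+1}$ or $p'=n$, which for $\x=(k^p)$ singles out $p'=p$.) Your handling of $p'>p$ and the Gauss polynomial manipulation are correct, and you rightly note that in the surviving $p'=p$ term, condition (\ref{eq:restrllIx:a}) becomes vacuous as $k\to\infty$ while (\ref{eq:restrllIx:c}) subsumes (\ref{eq:restrllIx:b}); once the $p'<p$ argument is repaired as above, the proof is sound.
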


The theorem implies that the maximal cohomological index for which $H^{\bullet}_{I_p}(S)$ is non-zero (the \defi{cohomological dimension} of the ideal $I_p$) is obtained for $s=0$ and is equal to
\[(n-p+1)^2+n\cdot(m-n)+(p-1)\cdot(n-p)=m\cdot n-p^2+1.\]
This was first observed in \cite{bru-sch}. Our result says that \emph{many} of the local cohomology modules $H^j_{I_p}(S)$ are non-zero, for $j$ between the depth $(m-p+1)\cdot(n-p+1)$ of the ideal $I_p$ and its cohomological dimension. This contrasts with the positive characteristic situation where the only non-vanishing local cohomology module appears in degree $j=(m-p+1)\cdot(n-p+1)$ (see \cite[Cor.~4]{hochster-eagon} or \cite[Cor.~5.18]{bruns-vetter} where it is shown that $I_p$ is perfect, and \cite[Prop.~4.1]{peskine-szpiro} where a local cohomology vanishing result for perfect ideals in positive characteristic is proved). For determinantal ideals over arbitrary rings one can't expect such explicit results as Theorem~\ref{thm:loccohdetl}; for the latest advances in this general context, the reader should consult \cite{lyubeznik-singh-walther} and the references therein.

Our paper is organized as follows. In Section~\ref{sec:prelim} we give some representation-theoretic preliminaries: in Section~\ref{subsec:repthy} we fix some notation for Schur functors, weights and partitions; in Section~\ref{subsec:IxJxp} we recall from \cite{deconcini-eisenbud-procesi} some properties of the ideals $I_{\x}$, and introduce certain associated subquotients $J_{\x,p}$ that will play an important role in the sequel; in Section~\ref{subsec:bott} we recall the definition of flag varieties and formulate some consequences of Bott's theorem in a form that will be useful to us; we also recall in Section~\ref{subsec:ext} a method described in \cite{RWW} for computing extension groups for certain modules that arise as push-forwards of vector bundles with vanishing higher cohomology. In Section~\ref{sec:extmodulesJxp} we compute explicitly the characters of the modules $\Ext^{\bullet}_S(J_{\x,p},S)$, and in Section~\ref{sec:extmodulesIx} we use this calculation to deduce the main result about the 
characters of the modules $\Ext^{\bullet}_S(S/I_{\x},S)$ for all partitions $\x$. In Section~\ref{sec:regularity} we derive the formulas for the regularity of the ideals $I_{\x}$, while in Section~\ref{sec:determinantal} we describe the characters of the local cohomology modules with support in determinantal varieties.

\section{Preliminaries}\label{sec:prelim}

\subsection{Representation Theory {\cite{ful-har}, \cite[Ch.~2]{weyman}}}\label{subsec:repthy}
Throughout the paper, $\K$ will denote a field of characteristic $0$. If $W$ is a $\K$--vector space of dimension $\dim(W)=N$, a choice of basis determines an isomorphism between $\GL(W)$ and $\GL_N(\K)$. We will refer to $N$--tuples $\ll=(\ll_1,\cdots,\ll_N)\in\bb{Z}^N$ as \defi{weights} of the corresponding maximal torus of diagonal matrices. We say that $\ll$ is a \defi{dominant weight} if $\ll_1\geq\ll_2\geq\cdots\geq\ll_N$. Irreducible representations of $\GL(W)$ are in one-to-one correspondence with dominant weights $\ll$. We denote by $S_{\ll}W$ the irreducible representation associated to $\ll$, often referred to as a \defi{Schur functor}. We write $(a^N)$ for the weight with all parts equal to $a$, and define the \defi{determinant} of $W$ by $\det(W)=S_{(1^N)}W=\bw^N W$. We have $S_{\ll}W\oo\det(W)=S_{\ll+(1^N)}W$, and $S_{\ll}W^*=S_{(-\ll_N,\cdots,-\ll_1)}W$. We write $|\ll|$ for the total size $\ll_1+\cdots+\ll_N$ of $\ll$.
 
When $\x$ is a dominant weight with $x_N\geq 0$, we say that $\x$ is a \defi{partition} of $r=|\x|$. Note that when we're dealing with partitions we often omit the trailing zeros, so $\x=(5,2,1)$ is the same as $\x=(5,2,1,0,0,0)$. If $\y$ is another partition, we write $\x\subset\y$ to indicate that $x_i\leq y_i$ for all $i$.

\subsection{The ideals $I_{\x}$ and the subquotients $J_{\x,p}$}\label{subsec:IxJxp}
 Recall the Cauchy formula (\ref{eq:cauchy}) and the definition of the ideals $I_{\x}\subset S=\Sym(F\oo G)$ as the ideals generated by subrepresentations $S_{\x}F\oo S_{\x}G$ of $S$. It is shown in \cite{deconcini-eisenbud-procesi} that
\begin{equation}\label{eq:decompIx}
I_{\x}=\bigoplus_{\x\subset\y} S_{\y}F\oo S_{\y}G,
\end{equation}
and in particular $I_{\y}\subset I_{\x}$ if and only if $\x\subset\y$. More generally, for arbitrary partitions $\x,\y$, we let $\z=\max(\x,\y)$ be defined by $z_i=\max(x_i,y_i)$ for all $i$, and get
\begin{equation}\label{eq:intersectionIz}
 I_{\x}\cap I_{\y}=I_{\z}.
\end{equation}
Even more generally, for any set $T$ of partitions we let
\begin{equation}\label{eq:defIT}
I_T=\sum_{\y\in T}I_{\y}, 
\end{equation}
and have
\begin{equation}\label{eq:IxintersectIT}
I_{\x}\cap I_T=\sum_{\y\in T} I_{\max(\x,\y)}.
\end{equation}

For $p\in\{0,1,\cdots,n\}$ and $\x$ a partition, we write
\begin{equation}\label{eq:defSucc}
\rm{Succ}(\x,p)=\{\y:\x\subset\y,\rm{ and }y_i>x_i\rm{ for some }i>p\}. 
\end{equation}
By the discussion above, $I_{\y}\subset I_{\x}$ for all $\y\in\rm{Succ}(\x,p)$. We define
\begin{equation}\label{eq:defJxp}
J_{\x,p}=I_{\x}/I_{\rm{Succ}(\x,p)}
\end{equation}
It follows from (\ref{eq:decompIx}) that
\begin{equation}\label{eq:decompJxp}
J_{\x,p}=\bigoplus_{\substack{\x\subset\y \\ y_i=x_i\rm{ for all }i>p}} S_{\y}F\oo S_{\y}G.
\end{equation}
If $p=n$ then $J_{\x,p}=I_{\x}$, while if $p=0$ then $J_{\x,p}=S_{\x}F\oo S_{\x}G$ is just a vector space (it is annihilated by the maximal ideal of $S$). We have

\begin{lemma}\label{lem:succxp}
 Fix an index $p\in\{0,1,\cdots,n-1\}$, and consider a partition $\x$ with $x_1=\cdots=x_{p+1}$. Let
\begin{equation}\label{eq:defZ}
Z=\{\z:z_1=\cdots=z_{p+1}=x_1\}.
\end{equation}
We have
\begin{equation}\label{eq:succxinZ}
I_{\rm{Succ}(\x,p)}=\left(\sum_{\z\in Z,\ \x\subsetneq\z} I_{\z}\right) + I_{\max(\x,(x_1+1)^{p+1})}.
\end{equation}
\end{lemma}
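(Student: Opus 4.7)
The key observation is that all of the ideals involved are $\GL(F)\times\GL(G)$-equivariant, so by (\ref{eq:decompIx}) each is a direct sum of Schur components $S_\w F\oo S_\w G$ indexed by partitions. Thus the equality of ideals reduces to the equality of the sets of partitions $\w$ for which $S_\w F\oo S_\w G$ appears in each side. The plan is to characterize these three sets combinatorially and check set-theoretic equality.

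First I would unpack the left-hand side. Using (\ref{eq:decompIx}) and the definition of $\rm{Succ}(\x,p)$, the partitions $\w$ contributing to $I_{\rm{Succ}(\x,p)}$ are precisely those satisfying
\[A=\{\w:\x\subset\w,\ w_i>x_i\text{ for some }i>p\};\]
the ``only if'' is immediate from $\y\subset\w$ with $\y\in\rm{Succ}(\x,p)$, and the ``if'' is witnessed by taking $\y=\w$. Similarly, for the right-hand side, the partitions in $I_{\max(\x,(x_1+1)^{p+1})}$ form the set
\[B_1=\{\w:w_i\geq x_1+1\text{ for }i\leq p+1,\ w_i\geq x_i\text{ for }i>p+1\},\]
while the partitions in $\sum_{\z\in Z,\,\x\subsetneq\z}I_{\z}$ form
\[B_2=\{\w:\w\supset\z\text{ for some }\z\in Z\text{ with }\x\subsetneq\z\}.\]
It remains to show $A=B_1\cup B_2$.

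The inclusion $B_1\cup B_2\subset A$ is routine: an element of $B_1$ has $w_{p+1}\geq x_1+1>x_{p+1}$, while an element of $B_2$ inherits the strict containment $\x\subsetneq\z$, which (since the first $p+1$ parts of any $\z\in Z$ coincide with those of $\x$) must occur at some index $i>p+1$. For the harder inclusion $A\subset B_1\cup B_2$, I would split on the value of $w_{p+1}$. If $w_{p+1}\geq x_1+1$ then the monotonicity of $\w$ puts $\w$ directly into $B_1$. Otherwise, combining $w_{p+1}\leq x_1$ with the hypothesis $w_{p+1}\geq x_{p+1}=x_1$ forces $w_{p+1}=x_1$, and the desired $\z$ is obtained by truncation:
\[z_i=x_1\ (i\leq p+1),\qquad z_i=\min(w_i,x_1)\ (i>p+1).\]
Then $\z\in Z$ is evidently a partition with $\w\supset\z\supset\x$.

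The one delicate point is verifying $\z\neq\x$, which is where the analysis is the most careful. Pick $j>p$ with $w_j>x_j$ (guaranteed by $\w\in A$). The case $j=p+1$ is ruled out by $w_{p+1}=x_1=x_{p+1}$, so $j\geq p+2$. Then $w_j\leq w_{p+1}=x_1$ forces $x_j<x_1$, hence $\min(x_j+1,x_1)=x_j+1$, which gives $z_j\geq x_j+1>x_j$ as needed. This yields $\w\in B_2$ and completes the argument. I anticipate this last step — ensuring the truncation $\z$ strictly contains $\x$ — to be the only place where one must work rather than read off definitions; everything else is bookkeeping with the Schur-component decomposition.
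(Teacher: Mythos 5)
Your proof is correct and follows essentially the same strategy as the paper's: reduce the ideal equality to a combinatorial statement about partitions via the equivariant decomposition, split on whether the $(p+1)$-st part exceeds $x_1$, and in the harder case produce a witness $\z\in Z$ with $\x\subsetneq\z$ by truncating at $x_1$. The only cosmetic difference is that you unpack one level further, characterizing the Schur components $\w$ appearing in each ideal, whereas the paper argues directly in terms of the generating partitions $\y\in\rm{Succ}(\x,p)$ and enlarges $I_{\y}$ by shrinking its first $p$ rows to $x_1$; the truncation idea and the verification that the result strictly contains $\x$ are the same in both.
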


\begin{proof}
 ``$\supset$'': Consider $\z\in Z$, $\x\subsetneq\z$. We have $z_i>x_i$ for some $i$, and since $x_i=z_i$ for $i\leq p+1$, we conclude that $z_i>x_i$ for some $i>p+1$, thus $\z\in\rm{Succ}(\x,p)$. Writing $\y=\max(\x,(x_1+1)^{p+1})$ we have that $y_{p+1}>x_{p+1}$ and $\y\supset\x$, so $\y\in\rm{Succ}(\x,p)$, proving that the RHS of (\ref{eq:succxinZ}) is contained in the LHS.
 
 ``$\subset$'': Consider a partition $\y\in\rm{Succ}(\x,p)$. If $y_{p+1}>x_{p+1}=x_1$ then $\y\supset\max(\x,(x_1+1)^{p+1})$, so $I_{\y}$ is contained in the RHS of (\ref{eq:succxinZ}). Otherwise $y_{p+1}=x_{p+1}$, so by possibly shrinking some of the first $p$ rows of $\y$ (which would enlarge $I_{\y}$), we may assume that $\y\in Z$. Clearly $\y\supsetneq\x$, since $y_i>x_i$ for some $i>p+1$, so it follows again that $I_{\y}$ is contained in the RHS of (\ref{eq:succxinZ}).
\end{proof}

The following result will be used in Section~\ref{sec:extmodulesIx}:

\begin{lemma}\label{lem:filtrationaddp+1}
 Fix an index $p\in\{0,1,\cdots,n-1\}$, and consider a partition $\x$ with $x_1=\cdots=x_{p+1}$. For a non-negative integer $d\geq 0$, let $\y$ be the partition defined by $y_i=x_i+d+1$ for $i=1,\cdots,p+1$, and $y_i=x_i$ for $i>p+1$ ($\y=\max(\x,(x_1+d+1)^{p+1})$). The quotient $I_{\x}/I_{\y}$ admits a filtration with successive quotients $J_{\z,p}$, where $\z$ runs over all partitions with 
\[
\begin{cases}
 x_1\leq z_1=\cdots=z_{p+1}\leq x_1+d, & \\
 z_i\geq x_i,\textrm{ for }i>p+1.
\end{cases}
\]
\end{lemma}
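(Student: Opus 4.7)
Let $T$ denote the set of partitions $\z$ described in the statement; note that $\x\in T$. The plan is to build an $S$-module filtration of $I_{\x}/I_{\y}$ by the images of the ideals $I_{\z}$ for $\z\in T$, and identify each successive quotient with $J_{\z,p}$ via Lemma~\ref{lem:succxp}. I would order $T$ as $\z^{(1)},\ldots,\z^{(N)}$ by any linear extension of the \emph{reverse} containment order, so that $\z^{(i)}\subsetneq\z^{(j)}$ forces $i>j$, and set
\[
M_k \;=\; \left(I_{\y} + \sum_{i\leq k}I_{\z^{(i)}}\right)\big/I_{\y},\qquad k=0,\ldots,N.
\]
Then $M_0=0$ and $M_N=I_{\x}/I_{\y}$ (because $\x\in T$), so it remains to show $M_k/M_{k-1}\cong J_{\z^{(k)},p}$ for every $k$.

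The second step is to compute $M_k/M_{k-1}$. The third isomorphism theorem, followed by formula~(\ref{eq:IxintersectIT}) applied with $T'=\{\z^{(1)},\ldots,\z^{(k-1)},\y\}$, yields
\[
M_k/M_{k-1} \;\cong\; I_{\z^{(k)}}\big/\Bigl(\textstyle\sum_{i<k}I_{\max(\z^{(k)},\z^{(i)})} + I_{\max(\z^{(k)},\y)}\Bigr).
\]
On the other hand, since the first $p+1$ parts of $\z^{(k)}$ coincide, Lemma~\ref{lem:succxp} applied to $\z^{(k)}$ identifies $I_{\rm{Succ}(\z^{(k)},p)}$ with
\[
\Bigl(\textstyle\sum_{\w\in Z,\,\z^{(k)}\subsetneq\w}I_{\w}\Bigr) + I_{\max(\z^{(k)},(z^{(k)}_1+1)^{p+1})},
\]
where $Z=\{\w:w_1=\cdots=w_{p+1}=z^{(k)}_1\}$. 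The problem therefore reduces to showing that these two ideal sums coincide.

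For the inclusion ``$\supseteq$'', any $\w\in Z$ strictly above $\z^{(k)}$ satisfies $w_i\geq z^{(k)}_i\geq x_i$ for $i>p+1$, hence lies in $T$ strictly above $\z^{(k)}$ and equals some $\z^{(i)}$ with $i<k$ by the choice of order, giving $I_{\w}=I_{\max(\z^{(k)},\z^{(i)})}$; and $\max(\z^{(k)},(z^{(k)}_1+1)^{p+1})$ either coincides with $\max(\z^{(k)},\y)$ (when $z^{(k)}_1=x_1+d$) or is itself a partition in $T$ strictly above $\z^{(k)}$, hence some $\z^{(i)}$ with $i<k$. For the reverse inclusion, I would split the indices $i<k$ into the cases $z^{(i)}_1\leq z^{(k)}_1$ (where $\max(\z^{(k)},\z^{(i)})$ belongs to $Z$ and strictly contains $\z^{(k)}$, using that $\z^{(i)}\not\subsetneq\z^{(k)}$ forces some $z^{(i)}_j>z^{(k)}_j$ for $j>p+1$) and $z^{(i)}_1>z^{(k)}_1$ (where $\max(\z^{(k)},\z^{(i)})\supset((z^{(k)}_1+1)^{p+1},z^{(k)}_{p+2},\ldots,z^{(k)}_n)$, so the associated ideal is contained in $I_{\max(\z^{(k)},(z^{(k)}_1+1)^{p+1})}$); finally, $z^{(k)}_1+1\leq x_1+d+1$ forces $I_{\max(\z^{(k)},\y)}\subseteq I_{\max(\z^{(k)},(z^{(k)}_1+1)^{p+1})}$.

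The main obstacle is precisely this matching of the two ideal sums; the crucial ingredient is that the linear-extension choice guarantees every partition in $T$ strictly containing $\z^{(k)}$ has already been absorbed into $M_{k-1}$, so that the terms of Lemma~\ref{lem:succxp} can be reassembled from those produced by~(\ref{eq:IxintersectIT}). Once this verification is complete, $M_k/M_{k-1}=I_{\z^{(k)}}/I_{\rm{Succ}(\z^{(k)},p)}=J_{\z^{(k)},p}$, and the chain $0=M_0\subset M_1\subset\cdots\subset M_N=I_{\x}/I_{\y}$ is the desired filtration.
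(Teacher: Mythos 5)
Your proof is correct, and it reaches the same essential ingredients (Lemma~\ref{lem:succxp} and the intersection formula~(\ref{eq:IxintersectIT})), but organizes them differently from the paper. The paper first reduces to the case $d=0$ by an implicit induction on $d$, then proves that for any $I\in\mc{I}(Z)$ the quotient $(I+I_0)/I_0$ has a filtration by $J_{\z,p}$'s, inducting on $|Z(I)|$ and peeling off a \emph{maximal} element $\z^0$ at each step. You instead handle all $d$ at once by taking the full index set $T$, fixing a linear extension of the reverse-containment order, and defining the filtration $M_k=(I_{\y}+\sum_{i\leq k}I_{\z^{(i)}})/I_{\y}$ explicitly. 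The quotient identification $M_k/M_{k-1}\cong J_{\z^{(k)},p}$ then plays exactly the role of the paper's isomorphism $(I+I_0)/(I'+I_0)\cong J_{\z^0,p}$, and your case analysis correctly matches the ideal $\sum_{i<k}I_{\max(\z^{(k)},\z^{(i)})}+I_{\max(\z^{(k)},\y)}$ with $I_{\rm{Succ}(\z^{(k)},p)}$. What your version buys is avoiding the two-layered induction (on $d$, then on $|Z(I)|$) in favor of a single explicit chain over a concretely ordered finite set; what the paper's version buys is a slightly more modular proof, since the $|Z(I)|$-induction doesn't need to name the whole set $T$ up front. One small point: in Case~2 of the reverse inclusion, you route through $I_{\max(\z^{(k)},(z^{(k)}_1+1)^{p+1})}$, which is fine, but it is even more immediate to note that $\max(\z^{(k)},\z^{(i)})_{p+1}=z^{(i)}_1>z^{(k)}_{p+1}$ already places $\max(\z^{(k)},\z^{(i)})$ in $\rm{Succ}(\z^{(k)},p)$ directly, since $p+1>p$.
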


\begin{proof}
 By induction, it suffices to prove the result when $d=0$. We consider $Z$ as in (\ref{eq:defZ}) and define
\[\mc{I}(Z)=\{I_T:T\subset Z\}.\]
For $I\in\mc{I}(Z)$, we write
\[Z(I)=\{\z\in Z:I_{\z}\subset I\}.\]
Note that if $\z^0\in Z(I)$ then
\begin{equation}\label{eq:z0inz}
 \rm{if }\z\in Z\rm{ and }\z^0\subset\z\rm{ then }\z\in Z(I).
\end{equation}

We let $I_0=I_{((x_1+1)^{p+1})}$ and prove by induction on $|Z(I)|$ that for $I\in\mc{I}(Z)$, the quotient $(I+I_0)/I_0$ has a filtration with successive quotients $J_{\z,p}$, where $\z$ varies over the set of elements of $Z(I)$. Once we do this, we can take $I=I_{\x}$ and observe that $I_{\x}\cap I_0=I_{\y}$ (by (\ref{eq:intersectionIz})), which yields
\[(I+I_0)/I_0\simeq I/(I\cap I_0)=I_{\x}/I_{\y},\]
concluding the proof of the lemma.
 
For the induction, assume first that $|Z(I)|=1$, so that $I=I_{\z}$ with $z_1=\cdots=z_n=x_1$. We have $(I_{\z}+I_0)/I_0=J_{\z,p}$ so the base case for the induction follows.

Suppose now that $|Z(I)|>1$ and consider a \emph{maximal} element $\z^0$ in $Z(I)$, i.e. a partition $\z^0$ with the property that $I_{\z^0}\not\subset I_{\z}$ for any $\z\in Z(I)\setminus\{\z^0\}$. Define
\[I'=I_{Z(I)\setminus\{\z^0\}},\]
and note that $|Z(I')|=|Z(I)|-1$, $I=I'+I_{\z^0}$, and
\begin{equation}\label{eq:isoJxp}
(I+I_0)/(I'+I_0)\simeq J_{\z^0,p}, 
\end{equation}
which is proved as follows. The equality $I=I'+I_{\z^0}$ implies that the natural map
\[I_{\z^0}\to (I+I_0)/(I'+I_0)\]
is surjective. Its kernel is 
\[
\begin{split}
I_{\z^0}\cap(I'+I_0)\overset{(\ref{eq:IxintersectIT})}{=}&\left(\sum_{\z\in Z(I)\setminus\{\z^0\}} I_{\max(\z^0,\z)}\right) + I_{\max(\z^0,(x_1+1)^{p+1})}\\
\overset{(\ref{eq:z0inz})}{=}&\left(\sum_{\z\in Z,\ \z^0\subsetneq\z} I_{\z}\right) + I_{\max(\z^0,(x_1+1)^{p+1})}\overset{(\ref{eq:succxinZ})}{=}I_{\rm{Succ}(\z^0,p)},
\end{split}
\]
from which (\ref{eq:isoJxp}) follows. Since by induction $(I'+I_0)/I_0$ has a filtration with successive quotients $J_{\z,p}$ for $\z\in Z(I')$, we get the corresponding statement for $(I+I_0)/I_0$, finishing the induction step.
\end{proof}

\subsection{Partial flag varieties and Bott's Theorem {\cite[Ch.~4]{weyman}}}\label{subsec:bott}

Consider a $\K$--vector space $V$ with $\dim(V)=d$, and positive integers $q\leq n\leq d$. We denote by $Flag([q,n];V)$ the variety of partial flags
\[V_{\bullet}:\quad V\onto V_{n}\onto V_{n-1}\cdots\onto V_q\onto 0,
\]
where $V_p$ is a $p$--dimensional quotient of $V$ for each $p=q,q+1,\cdots,n$. For $p\in[q,n]$ we write $\Qp{p}{V}$ for the tautological rank $p$ quotient bundle on $Flag([q,n];V)$ whose fiber over a point $V_{\bullet}\in Flag([q,n];V)$ is $V_p$. For each $p$ there is a natural surjection of vector bundles
\begin{equation}\label{eq:flagsurjectionp}
V\oo\mc{O}_{Flag([q,n];V)}\onto\Qp{p}{V}. 
\end{equation}
Note that for $q=n$, $Flag([q,n];V)=\bb{G}(n,V)$ is the Grassmannian of $n$--dimensional quotients of $V$.

We consider the natural projection maps
\begin{equation}\label{eq:defpiq}
\pi^{(q)}:Flag([q,n];V)\to Flag([q+1,n];V), 
\end{equation}
defined by forgetting $V_q$ from the flag $V_{\bullet}$. For $q\leq n-1$, this map identifies $Flag([q,n];V)$ with the projective bundle $\bb{P}_{Flag([q+1,n];V)}(\Qp{p+1}{V})$, which comes with a tautological surjection
\[\Qp{p+1}{V}\onto\Qp{p}{V}.\]
For $q=n$ we make the convention $Flag([q+1,n];V)=\Spec(\K)$. With the usual notation $R^{\bullet}\pi^{(q)}_*$ for derived push-forward, we have the following consequence of Bott's theorem:

\begin{theorem}\label{thm:bott}
 (a) Suppose that $q\leq n-1$ and consider a dominant weight $\mu\in\bb{Z}^q$. For $q<p\leq n$
 \[R^j\pi^{(q)}_*(S_{\mu}\Qp{p}{V})=
 \begin{cases}
  S_{\mu}\Qp{p}{V} & \rm{if }j=0, \\
  0 & \rm{otherwise}.
 \end{cases}
\]
 If $\mu_{q-t}+t=-1$ for some $t=0,\cdots,q-1,$ then
 \[R^j\pi^{(q)}_*(S_{\mu}\Qp{q}{V})=0\rm{ for all }j.\]
 Otherwise (with the convention $\mu_0=\infty$, $\mu_{q+1}=-\infty$), there exists a unique index $0\leq t\leq q$ such that
\[\mu_{q-t+1}+t+1\leq 0\leq\mu_{q-t}+t.\]
Letting
\[\tl{\mu}=(\mu_1,\cdots,\mu_{q-t},-t,\mu_{q-t+1}+1,\cdots,\mu_q+1),\]
we have
\[R^j\pi^{(q)}_*(S_{\mu}\Qp{q}{V})=
\begin{cases}
S_{\tl{\mu}}\Qp{q+1}{V} & \rm{if }j=t,\\
0 & \rm{otherwise}.
\end{cases}
\]

 (b) Consider a dominant weight $\mu\in\bb{Z}^n$. If $n-d\leq \mu_{n-s}+s\leq -1$ for some $s=0,\cdots,n-1$ then
 \[R^j\pi^{(n)}_*(S_{\mu}\Qp{n}{V})=0\rm{ for all }j.\]
 Otherwise, (with the convention $\mu_0=\infty$, $\mu_{n+1}=-\infty$), there exists a unique index $0\leq s\leq n$ such that
\[\mu_{n-s}\geq -s\rm{ and }\mu_{n-s+1}\leq -s-d+n.\] 
Letting 
\[\tl{\mu}=(\mu_1,\cdots,\mu_{n-s},\underbrace{-s,\cdots,-s}_{d-n},\mu_{n-s+1}+(d-n),\cdots,\mu_n+(d-n))\in\bb{Z}^d,\]
(compare to (\ref{eq:lls})) we have
\[R^j\pi^{(n)}_*(S_{\mu}\Qp{n}{V})=
\begin{cases}
S_{\tl{\mu}}V & \rm{if }j=s\cdot(d-n), \\
0 & \rm{otherwise}.
\end{cases}
\]

\end{theorem}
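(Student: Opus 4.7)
This is the relative Borel--Weil--Bott theorem for the tower of flag varieties $Flag([q,n];V) \to Flag([q+1,n];V)$. The plan is to identify each projection $\pi^{(q)}$ with a Grassmann bundle and reduce everything to the absolute Borel--Weil--Bott theorem on the fibers, then match the combinatorial conditions stated in the theorem against the standard $\rho$-shift algorithm.

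For part (a), as already noted in the excerpt, $\pi^{(q)}$ is the relative Grassmannian $\bb{G}(q,\Qp{q+1}{V}) = \bb{P}_{Flag([q+1,n];V)}(\Qp{q+1}{V})$, whose fibers are projective spaces $\bb{P}^q$. Since this map is smooth and proper, cohomology and base change gives fiberwise control of derived pushforwards; combined with the $\GL(V)$-equivariant structure, it is enough to compute $H^{\bullet}$ of the relevant bundle on a single fiber to pin down the higher pushforward globally. In the first case $q < p \leq n$, the bundle $S_{\mu}\Qp{p}{V}$ is the pullback of the bundle of the same name from $Flag([q+1,n];V)$, so the projection formula gives
\[R^j\pi^{(q)}_*(S_{\mu}\Qp{p}{V}) = S_{\mu}\Qp{p}{V} \otimes R^j\pi^{(q)}_*(\mc{O}),\]
and the vanishing/nonvanishing of $R^j\pi^{(q)}_*\mc{O}$ for a relative projective space yields the stated result. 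In the second case $p = q$, restricting to a fiber reduces the problem to computing $H^j(\bb{P}^q, S_{\mu}\mc{Q})$ where $\mc{Q}$ is the tautological rank-$q$ quotient bundle, which is a classical instance of Borel--Weil--Bott.

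For part (b), I would apply Borel--Weil--Bott directly to $\bb{G}(n,V)$. The standard procedure is: extend $\mu$ to $\widehat{\mu} = (\mu_1,\ldots,\mu_n,0,\ldots,0) \in \bb{Z}^d$ with $d-n$ trailing zeros, add $\rho = (d-1,d-2,\ldots,0)$, sort into strictly decreasing order (if possible), and subtract $\rho$. If $\widehat{\mu}+\rho$ has a repeated entry the cohomology vanishes; otherwise the resulting dominant weight $\tl{\mu}$ labels the unique nonvanishing cohomology, sitting in a degree equal to the number of transpositions needed to sort. The singular case corresponds to one of the trailing coordinates of $\widehat{\mu}+\rho$ (which are $d-n, d-n-1,\ldots,1$) colliding with $\mu_{n-s}+s+(d-n-1)$ or lying between $\mu_{n-s}+s$ and $\mu_{n-s+1}+s+(d-n)$, which unwinds to $n-d \leq \mu_{n-s}+s \leq -1$. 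In the regular case the unique $s$ with $\mu_{n-s} \geq -s$ and $\mu_{n-s+1} \leq -s-(d-n)$ tells us exactly where to splice the block of $d-n$ coordinates, producing the weight $\tl{\mu}$ of the theorem; the $d-n$ trailing entries each have to jump past $s$ larger entries of $\mu$, giving a total of $s\cdot(d-n)$ transpositions and matching the stated cohomological degree.

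The main obstacle will be the bookkeeping in translating the theorem's explicit inequalities for the distinguished index $t$ (resp. $s$) and the insertion formula for $\tl{\mu}$ into the standard Weyl-group algorithm above, keeping track of the conventions for dominant weights of quotient (rather than sub) bundles, which is what determines the direction of the inequalities and the form of $\rho$. Once these conventions are fixed, both cases reduce to a single combinatorial check, and the globalization from fiber to flag bundle is formal via flat base change and $\GL(V)$-equivariance.
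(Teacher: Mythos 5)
Your plan is correct and follows the same route the paper implicitly relies on: the paper states Theorem~\ref{thm:bott} without proof as a ``consequence of Bott's theorem'' with a pointer to \cite[Ch.~4]{weyman}, which is precisely the relative Borel--Weil--Bott machinery you invoke. Your combinatorial translation in part (b) (extend $\mu$ by $d-n$ zeros, add $\rho=(d-1,\ldots,0)$, detect collisions, count transpositions, subtract $\rho$) correctly reproduces both the vanishing condition $n-d\leq\mu_{n-s}+s\leq -1$, the splice point $s$, the output weight $\tl{\mu}$, and the cohomological degree $s\cdot(d-n)$; the analogous check on the fiber $\bb{P}^q=\bb{G}(q,q+1)$ handles part (a). Two small points: the trailing entries of $\widehat{\mu}+\rho$ are $d-n-1,d-n-2,\ldots,1,0$, not $d-n,\ldots,1$ as you wrote (a harmless slip, since you then land on the correct inequality), and the passage from the fiberwise computation to the relative pushforward is cleaner if you cite the relative Bott theorem directly (Weyman, Cor.~4.1.9 and its relative form) rather than appealing to cohomology-and-base-change plus equivariance; the latter is morally right but leaves the globalization as a separate verification, whereas the relative statement is exactly what is needed.
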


\subsection{Computing Ext modules via duality}\label{subsec:ext} In this section we recall \cite[Thm.~3.1]{RWW} as a tool to compute $\Ext^{\bullet}_S(M,S)$ when $M$ comes as the push-forward of certain vector bundles with vanishing higher cohomology. More precisely, we have

\begin{theorem}\label{thm:duality}
 Let $X$ be a projective variety, and let $W$ be a finite dimensional $\K$--vector space. Suppose
\[W\oo\mc{O}_X\onto\eta,\]
is a surjective map, where $\eta$ is locally free, and let $k=\dim(W)-\rank(\eta)$. Consider a locally free sheaf $\mc{V}$ on $X$ and define
\[\mc{M}(\mc{V})=\mc{V}\oo\Sym(\eta),\quad\mc{M}^*(\mc{V})=\mc{V}\oo\det(W)\oo\det(\eta^*)\oo\Sym(\eta^*).\]
Giving $\mc{V}$ internal degree $v$, and $\eta$ and $W$ degree $1$, we think of $\mc{M}(\mc{V})$ and $\mc{M}^*(\mc{V})$ as graded sheaves, with
\[\mc{M}(\mc{V})_{i+v}=\mc{V}\oo\Sym^i(\eta),\ \mc{M}^*(\mc{V})_{i+v}=\mc{V}\oo\det(W)\oo\det(\eta^*)\oo\Sym^{-i+k}(\eta^*).\]
Suppose that $H^j(X,\mc{M}(\mc{V}))=0$ for $j>0$, and let
\[M(\mc{V})=H^0(X,\mc{M}(\mc{V})).\] 
We have for each $j\geq 0$ a graded isomorphism
\begin{equation}\label{eq:ExtH}
\Ext^j_S(M(\mc{V}),S)=H^{k-j}(X,\mc{M}^*(\mc{V}))^*, 
\end{equation}
where $(-)^*$ stands for the graded dual.
\end{theorem}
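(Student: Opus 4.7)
The natural strategy is to realize $M(\mc{V})$ as a pushforward from a sub-bundle of the ambient space $X\times Y$ with $Y=\Spec(S)$, and to apply Grothendieck--Serre duality. Dualizing the surjection $W\oo\mc{O}_X\onto\eta$ gives an embedding $\eta^*\hra W^*\oo\mc{O}_X$, so the total space $\mc{E}=\Spec_X(\Sym(\eta))$ of $\eta^*$ realizes as a closed sub-bundle $j:\mc{E}\hra X\times Y$ of codimension $k$. Let $\pi:\mc{E}\to X$ be the bundle projection and $q:X\times Y\to Y$ the other projection. Under the vanishing hypothesis, the derived pushforward $R(q\circ j)_*(\pi^*\mc{V})$ equals $M(\mc{V})$, concentrated in cohomological degree $0$.

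The first step is Grothendieck duality along $q\circ j$:
\[R\Hom_{\mc{O}_Y}(M(\mc{V}),\mc{O}_Y)\simeq R(q\circ j)_*\,R\Hom_{\mc{O}_{\mc{E}}}(\pi^*\mc{V},(q\circ j)^!\mc{O}_Y).\]
One unwinds $(q\circ j)^!\mc{O}_Y=j^!q^!\mc{O}_Y$: since $q$ has smooth fibers of dimension $d=\dim X$, $q^!\mc{O}_Y=p_X^*\omega_X[d]$; and since $j$ is a regular embedding of codimension $k$ with normal bundle $\mc{N}_j=\pi^*\mc{K}^*$ where $\mc{K}=\ker(W\oo\mc{O}_X\to\eta)$, one has $j^!(-)=j^*(-)\oo\det(\mc{N}_j)[-k]$. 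The exact sequence $0\to\mc{K}\to W\oo\mc{O}_X\to\eta\to 0$ gives $\det(\mc{K})=\det(W)\oo\det(\eta)^*$, so $\det(\mc{N}_j)=\pi^*(\det(W)^*\oo\det(\eta))$.

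Now push the complex through $\pi$: since $\mc{V}$ is locally free, $R\Hom_{\mc{O}_{\mc{E}}}(\pi^*\mc{V},\pi^*\mc{L})=\pi^*(\mc{V}^*\oo\mc{L})$ for any locally free sheaf $\mc{L}$ on $X$, and since $\pi$ is affine, the projection formula yields $R\pi_*\pi^*(\mc{V}^*\oo\mc{L})=\mc{V}^*\oo\mc{L}\oo\Sym(\eta)$. Specializing $\mc{L}=\omega_X\oo\det(W)^*\oo\det(\eta)$ and taking $R\Gamma(X,-)$, cohomology in degree $j$ gives
\[\Ext^j_S(M(\mc{V}),S)=H^{j+d-k}(X,\,\mc{V}^*\oo\omega_X\oo\det(W)^*\oo\det(\eta)\oo\Sym(\eta)).\]
Serre duality on the smooth projective variety $X$ then rewrites the right side as $H^{k-j}(X,\mc{V}\oo\det(W)\oo\det(\eta^*)\oo\Sym(\eta^*))^*$, which is exactly $H^{k-j}(X,\mc{M}^*(\mc{V}))^*$ once one identifies $\Sym(\eta)^*\simeq\Sym(\eta^*)$ with the grading reversed.

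The main obstacle is bookkeeping rather than conceptual content. One must carefully combine the cohomological shifts ($+d$ from $q^!$, $-k$ from $j^!$, and $-d$ from Serre duality) so that $\Ext^j$ on the left matches $H^{k-j}$ on the right, and must verify that the internal grading works out: the degree-$k$ twist from $\det(W)\oo\det(\eta^*)$ combines with the flip $\Sym^i(\eta)\leadsto\Sym^{-i+k}(\eta^*)$ to yield a graded isomorphism compatible with the conventions $\deg(\mc{V})=v$ and $\deg(\eta)=\deg(W)=1$. Smoothness of $X$ is needed to invoke the clean formula $q^!\mc{O}_Y=p_X^*\omega_X[d]$; this is automatic in the paper's applications, where $X$ is always a product of flag varieties.
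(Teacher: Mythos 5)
Your argument is correct and is precisely the Grothendieck--Serre duality computation along the closed embedding $\mathcal{E}=\Spec_X(\Sym\eta)\hookrightarrow X\times Y$ that the cited source \cite[Thm.~3.1]{RWW} uses; the paper does not reprove this theorem, only restates it, and all the bookkeeping of cohomological shifts ($+\dim X$ from $q^!$, $-k$ from $j^!$, $-\dim X$ from Serre duality) and internal degrees ($\det(W)\otimes\det(\eta^*)$ sits in degree $-k$, giving the $\Sym^{-i+k}(\eta^*)$ flip) checks out. The one caveat, which you yourself flag, is that the identifications $q^!\mathcal{O}_Y=p_X^*\omega_X[\dim X]$ and $H^i(X,\mathcal{F})^*\cong H^{\dim X-i}(X,\mathcal{F}^*\otimes\omega_X)$ require $X$ to be smooth (or at least Gorenstein), a hypothesis not visible in the statement as quoted but satisfied in every application here, where $X$ is a product of partial flag varieties.
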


\section{Ext modules for the subquotients $J_{\x,p}$}\label{sec:extmodulesJxp}

The goal of this section is to compute explicitly the character of $\Ext^{\bullet}_S(J_{\x,p},S)$ for all $p$ and all partitions $\x$ with $x_1=\cdots=x_p$, where $J_{\x,p}$ is defined as in (\ref{eq:defJxp}). We will achieve this by realizing $J_{\x,p}$ as the global sections of a vector bundle with vanishing higher cohomology on a certain product of flag varieties, and then using the duality Theorem~\ref{thm:duality} and Bott's Theorem~\ref{thm:bott}.

Consider as before vector spaces $F,G,$ with $\dim(F)=m$, $\dim(G)=n$, $m\geq n$. For $q=1,\cdots,n,$ we consider the projective varieties
\[X^{(q)}=Flag([q,n];F)\times Flag([q,n];G),\ X=X^{(\infty)}=\Spec\K,\]
and the locally free sheaves (see Section~\ref{subsec:bott})
\[\eta^{(p)}=\Qp{p}{F}\oo\Qp{p}{G},\ p=1,\cdots,n,\ \eta=\eta^{(\infty)}=F\oo G.\]
Note that $\eta^{(p)}$ can be thought of as a sheaf on $X^{(q)}$ whenever $p\geq q$. We consider for $q\leq n-1$ (resp. $q=n$) the natural maps $\pi^{(q)}:X^{(q)}\to X^{(q+1)}$ (resp. $\pi^{(n)}:X^{(n)}\to X$) induced from (\ref{eq:defpiq}). We define 
\[S^{(q)}=\Sym\eta^{(q)}\]
as relative versions of the polynomial ring $S=S^{(\infty)}=\Sym(F\oo G)$. We will always work implicitly with quasi-coherent sheaves on the affine bundles
\[Y^{(q)}=\bb{A}_{X^{(q)}}(\eta^{(q)})=\ul{\Spec}_{X^{(q)}}(S^{(q)}),\]
which we identify with $S^{(q)}$--modules on $X^{(q)}$ as in \cite[Ex.~II.5.17]{hartshorne}. The Cauchy formula (\ref{eq:cauchy}) becomes in the relative setting
\begin{equation}\label{eq:cauchySq}
S^{(q)}=\bigoplus_{\x=(x_1\geq\cdots\geq x_q\geq 0)}S_{\x}\Qp{q}{F}\oo S_{\x}\Qp{q}{G}, 
\end{equation}
and we can define the ideals $I_{\x}^{(q)}\subset S^{(q)}$ and subquotients $J_{\x,p}^{(q)}$ for $0\leq p\leq q$, analogously to~(\ref{eq:decompIx}) and~(\ref{eq:defJxp}). For $1\leq p\leq q$, we write $I_p^{(q)}$ for $I_{(1^p)}^{(q)}$, the ideal of $p\times p$ minors in $S^{(q)}$. We define the line bundle
\begin{equation}\label{eq:detq}
\det^{(q)}=\det(\Qp{q}{F})\oo\det(\Qp{q}{G}),
\end{equation}
and note that the ideal $I_q^{(q)}$ is generated by $\det^{(q)}$. It follows easily from (\ref{eq:cauchySq}) and Theorem~\ref{thm:bott} that
\begin{equation}\label{eq:Rpi*Sp}
 R^j\pi^{(q)}_*(S^{(q)})=
\begin{cases}
 S^{(q+1)}/I^{(q+1)}_{q+1} & \rm{if }j=0, \\
 0 & \rm{otherwise},
\end{cases}
\rm{ and for }p>q,\ 
 R^j\pi^{(q)}_*(S^{(p)})=
\begin{cases}
 S^{(p)} & \rm{if }j=0, \\
 0 & \rm{otherwise}.
\end{cases}
\end{equation}

\begin{lemma}\label{lem:operationsIxJxp}
 (a) For a partition $\x=(x_1\geq\cdots\geq x_q)$, there exist natural identifications
\begin{equation}\label{eq:Ixq*det}
\det^{(q)}\oo I_{\x}^{(q)}=I_{\x+(1^q)}^{(q)},\rm{ and } 
\end{equation}
\begin{equation}\label{eq:Jxq*det}
\det^{(q)}\oo J_{\x,p}^{(q)}=J_{\x+(1^q),p}^{(q)},\rm{ for }0\leq p\leq q.
\end{equation}

(b) For a partition $\x=(x_1\geq\cdots\geq x_q)$ we have
\begin{equation}\label{eq:Rpi*Ixq}
R^j\pi^{(q)}_* I_{\x}^{(q)}=
\begin{cases}
 (I_{\x}^{(q+1)}+I_{q+1}^{(q+1)})/I_{q+1}^{(q+1)} & \rm{if }j=0, \\
 0 & \rm{otherwise}.
\end{cases}
\end{equation}

(c) For a partition $\x=(x_1\geq\cdots\geq x_q)$ and $0\leq p\leq q$ we have
\begin{equation}\label{eq:Rpi*Jxpq}
R^j\pi^{(q)}_* J_{\x,p}^{(q)}=
\begin{cases}
 J_{\x,p}^{(q+1)} & \rm{if }j=0, \\
 0 & \rm{otherwise}.
\end{cases}
\end{equation}
\end{lemma}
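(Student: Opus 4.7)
My plan is to prove all three parts by expanding each sheaf via the relative Cauchy decomposition~(\ref{eq:cauchySq}) and applying the relevant operation summand-by-summand, after which the desired identifications reduce to combinatorics of partitions.

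For~(a), the strategy is to invoke the Schur-functor identity $S_{\y}W\oo\det(W)=S_{\y+(1^N)}W$ on each tensor factor, so that tensoring $S_{\y}\Qp{q}{F}\oo S_{\y}\Qp{q}{G}$ with $\det^{(q)}$ shifts the indexing partition from $\y$ to $\y+(1^q)$. The map $\y\mapsto\y+(1^q)$ is a bijection between $\{\y\supset\x:\ell(\y)\leq q\}$ and $\{\y'\supset\x+(1^q):\ell(\y')\leq q\}$ (the latter automatically satisfies $y'_q\geq 1$, since $\x+(1^q)$ already has $q$-th part $\geq 1$), which gives~(\ref{eq:Ixq*det}). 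The constraint $y_i=x_i$ for $i>p$ is carried by the shift to $y'_i=(\x+(1^q))_i$ for $i>p$, so the same bijection yields~(\ref{eq:Jxq*det}).

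For~(b) and~(c), the plan is to factor $\pi^{(q)}$ as the product $\pi^{(q)}_F\times\pi^{(q)}_G$ of projective bundles over $X^{(q+1)}$ (when $q\leq n-1$), and use K\"unneth/base change to split the pushforward of an external tensor product into the external tensor product of one-factor pushforwards; each of these is then computed by Theorem~\ref{thm:bott}(a). The key observation is that for any partition $\y$ of length $\leq q$ the forbidden resonance $\mu_{q-t}+t=-1$ cannot occur (all parts of $\y$ are non-negative) and Bott selects the unique index $t=0$, producing $\tl{\y}=(\y,0)$. Consequently
\[R^j\pi^{(q)}_*\bigl(S_{\y}\Qp{q}{F}\oo S_{\y}\Qp{q}{G}\bigr)=\begin{cases}S_{\y}\Qp{q+1}{F}\oo S_{\y}\Qp{q+1}{G}&\textrm{if }j=0,\\ 0&\textrm{otherwise,}\end{cases}\]
where $\y$ on the right is viewed as a weight of length $q+1$ with trailing zero. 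Summing over the partitions appearing in the Cauchy expansion of $I_{\x}^{(q)}$ (resp.\ $J_{\x,p}^{(q)}$) produces a direct sum over $\y\supset\x$ with $y_{q+1}=0$ (resp.\ and $y_i=x_i$ for $p<i\leq q$). These match the Cauchy decompositions of $(I_{\x}^{(q+1)}+I_{q+1}^{(q+1)})/I_{q+1}^{(q+1)}$ and of $J_{\x,p}^{(q+1)}$ respectively: for the first, $I_{q+1}^{(q+1)}=\bigoplus_{y_{q+1}\geq 1}S_{\y}\Qp{q+1}{F}\oo S_{\y}\Qp{q+1}{G}$; for the second, the condition $y_{q+1}=0$ is automatic from $y_i=x_i$ for $i>p$ together with $x_{q+1}=0$.

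I do not anticipate a serious obstacle: once one notes that Bott selects $t=0$ for partitions, the rest is bookkeeping with Cauchy decompositions. The only care needed is with the edge case $q=n$, where $\pi^{(n)}$ targets $\Spec\K$; there one would invoke Theorem~\ref{thm:bott}(b) on each Grassmannian factor, and partitions again force the unique index $s=0$ with $\tl{\y}=(\y,0^{m-n})$, reducing~(\ref{eq:Rpi*Ixq}) and~(\ref{eq:Rpi*Jxpq}) to the same statement (with $I_{n+1}^{(n+1)}=0$, so that $(I_{\x}^{(n+1)}+I_{n+1}^{(n+1)})/I_{n+1}^{(n+1)}=I_{\x}$ as expected).
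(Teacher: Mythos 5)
Your approach is correct and essentially matches the paper's: for part (a) the paper invokes Pieri's rule for multiplication by the generic $q\times q$ determinant, which is exactly the Schur-functor identity $S_{\y}W\oo\det(W)=S_{\y+(1^N)}W$ that you use, and for parts (b) and (c) the paper's intermediate statement~(\ref{eq:Rpi*Sp}) is itself obtained by Cauchy plus Bott in precisely the summand-by-summand way you describe, after which (b) and (c) are bookkeeping (the paper packages the bookkeeping for (c) as an identity of the sets $\rm{Succ}^{(q)}(\x,p)$ and $\rm{Succ}^{(q+1)}(\x,p)$, while you carry it out directly on the Cauchy components). The one nuance worth being aware of, though it does not affect correctness for the uses made of this lemma, is that the paper's argument for (a) additionally records that the identification $\det^{(q)}\oo I_{\x}^{(q)}\cong I_{\x+(1^q)}^{(q)}$ is realized by multiplication by the nonzerodivisor $\det^{(q)}$ inside $S^{(q)}$, hence is an isomorphism of $S^{(q)}$-module sheaves and not merely a $\GL(F)\times\GL(G)$-equivariant matching of Cauchy summands; your bijection argument gives the latter, and the $S^{(q)}$-linearity is what makes the identification compatible with the inductive pushforward computation in Lemma~\ref{lem:Rpi*Mxp}.
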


\begin{proof} (a) The multiplication map $\det^{(q)}\oo S^{(q)}\to S^{(q)}$ is injective: if we think of $S^{(q)}$ as locally the ring of polynomial functions on $q\times q$ matrices, then $\det^{(q)}$ is the determinant of the generic $q\times q$ matrix. It follows that $\det^{(q)}\oo I_{\x}^{(q)}=\det^{(q)}\cdot I_{\x}^{(q)}$ is in fact an ideal in $S^{(q)}$. (\ref{eq:Ixq*det}) then follows from the fact that multiplying by the determinant corresponds to adding a column of maximal size to the Young diagram (a special case of Pieri's rule). In fact, the same argument shows that for any set of partitions $Z$
\[\det^{(q)}\oo\left(\sum_{\z\in Z}I_{\z}^{(q)}\right)=\sum_{\z\in Z}I_{\z+(1^q)}^{(q)}.\]
Given the definition of $J_{\x,p}^{(q)}$ as the analogue of~(\ref{eq:defJxp}), (\ref{eq:Jxq*det}) follows by taking $Z=\rm{Succ}^{(q)}(\x,p)$ (the analogue of (\ref{eq:defSucc})) in the formula above, and using~(\ref{eq:Ixq*det}) and the exactness of tensoring with $\det^{(q)}$.

Part (b) follows from (\ref{eq:Rpi*Sp}), while (c) follows from the fact that if $\x=(x_1,\cdots,x_q)$ and $0\leq p\leq q$ then
\[\rm{Succ}^{(q+1)}(\x,p)=\rm{Succ}^{(q)}(\x,p)\bigcup\{\z:\z\supset\x,\ z_{p+1}\geq 1\}.\qedhere\]
\end{proof}

For each partition $\x=(x_1=\cdots=x_p\geq x_{p+1}\cdots\geq x_n\geq x_{n+1}=0)$, we define the locally free sheaf $\mc{M}_{\x,p}$ on $X^{(p)}$ by
\begin{equation}\label{eq:Mxp}
\mc{M}_{\x,p}=\left(\bigotimes_{q=p}^n(\det^{(q)})^{\oo(x_q-x_{q+1})}\right)\oo S^{(p)}.
\end{equation}

\begin{lemma}\label{lem:Rpi*Mxp}
 With the notation above, we have
\[H^j(X^{(p)},\mc{M}_{\x,p})=
\begin{cases}
 J_{\x,p} & \rm{if }j=0, \\
 0 & \rm{otherwise}.
\end{cases}
\]
\end{lemma}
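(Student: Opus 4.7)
The plan is to compute $H^\bullet(X^{(p)},\mc{M}_{\x,p})$ by iteratively pushing $\mc{M}_{\x,p}$ forward along the tower
\[X^{(p)}\xrightarrow{\pi^{(p)}}X^{(p+1)}\xrightarrow{\pi^{(p+1)}}\cdots\xrightarrow{\pi^{(n)}}X^{(\infty)}=\Spec\K,\]
verifying at each stage that higher direct images vanish and that the zeroth pushforward has a clean form packaged by Lemma~\ref{lem:operationsIxJxp}. The main bookkeeping device is the family of truncated partitions
\[\lambda^{(q)}=(x_1-x_{q+1},\,x_2-x_{q+1},\,\ldots,\,x_q-x_{q+1})\quad\text{for }p\leq q\leq n,\]
which satisfy $\lambda^{(p)}=((x_p-x_{p+1})^p)$ (because $x_1=\cdots=x_p$) and $\lambda^{(n)}=\x$ (because $x_{n+1}=0$).

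I would prove by induction on $q\in\{p,p+1,\ldots,n\}$ that the iterated pushforward $R\pi^{(q-1)}_*\circ\cdots\circ R\pi^{(p)}_*\mc{M}_{\x,p}$ is concentrated in cohomological degree $0$ and equals
\[\mc{M}^{(q)}:=\left(\bigotimes_{r=q+1}^n (\det^{(r)})^{\oo(x_r-x_{r+1})}\right)\oo J_{\lambda^{(q)},p}^{(q)}.\]
The base case $q=p$ (empty composition) reduces to the identity $\mc{M}^{(p)}=\mc{M}_{\x,p}$: since $\lambda^{(p)}$ has at most $p$ parts, $\rm{Succ}^{(p)}(\lambda^{(p)},p)=\emptyset$, so $J_{\lambda^{(p)},p}^{(p)}=I^{(p)}_{((x_p-x_{p+1})^p)}=(\det^{(p)})^{\oo(x_p-x_{p+1})}\oo S^{(p)}$ by iterating Lemma~\ref{lem:operationsIxJxp}(a) starting from $S^{(p)}$, and multiplication by the remaining $\det^{(r)}$ factors yields exactly $\mc{M}_{\x,p}$.

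For the inductive step $q\to q+1$, the line bundles $\det^{(r)}$ with $r\geq q+1$ are all pulled back to $X^{(q)}$ via $\pi^{(q)}$, so the projection formula combined with Lemma~\ref{lem:operationsIxJxp}(c) produces
\[R\pi^{(q)}_*\mc{M}^{(q)}=\left(\bigotimes_{r=q+1}^n (\det^{(r)})^{\oo(x_r-x_{r+1})}\right)\oo J_{\lambda^{(q)},p}^{(q+1)}\]
in degree $0$ and zero otherwise. Setting $b=x_{q+1}-x_{q+2}$ and viewing $\lambda^{(q)}$ as a weight of length $q+1$ by appending a zero, the routine identity $\lambda^{(q)}+(b^{q+1})=\lambda^{(q+1)}$ combined with Lemma~\ref{lem:operationsIxJxp}(a) lets me absorb the $r=q+1$ factor as $(\det^{(q+1)})^{\oo b}\oo J_{\lambda^{(q)},p}^{(q+1)}=J_{\lambda^{(q+1)},p}^{(q+1)}$, producing exactly $\mc{M}^{(q+1)}$ and completing the induction.

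At $q=n$ the induction delivers $\mc{M}^{(n)}=J_{\x,p}^{(n)}$ on $X^{(n)}$, and one final application of $R\pi^{(n)}_*$ (again Lemma~\ref{lem:operationsIxJxp}(c)) yields $J_{\x,p}^{(\infty)}=J_{\x,p}$ in degree $0$ and zero otherwise. The Leray spectral sequence for the composition $X^{(p)}\to\Spec\K$ then gives $H^0(X^{(p)},\mc{M}_{\x,p})=J_{\x,p}$ and $H^j(X^{(p)},\mc{M}_{\x,p})=0$ for $j>0$. The only point requiring any real care is the inductive bookkeeping around the identity $\lambda^{(q)}+(b^{q+1})=\lambda^{(q+1)}$; all substantive cohomology-vanishing and module-theoretic content has been prepackaged in Lemma~\ref{lem:operationsIxJxp}.
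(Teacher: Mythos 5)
Your proof is correct and follows essentially the same strategy as the paper's: push $\mc{M}_{\x,p}$ forward step by step through the tower $X^{(p)}\to X^{(p+1)}\to\cdots\to\Spec\K$, using Lemma~\ref{lem:operationsIxJxp}(a) to peel off one $\det^{(q)}$ factor and Lemma~\ref{lem:operationsIxJxp}(c) to perform each pushforward. The paper simply writes out the first two steps and says ``iteratively,'' whereas you package the same recursion as a formal induction with the intermediate partitions $\lambda^{(q)}$ named explicitly; the content is the same.
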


\begin{proof}
 Note that $S^{(p)}=J_{\ul{0},p}^{(p)}$, so using (\ref{eq:Ixq*det}) we get
\[\mc{M}_{\x,p}=\left(\bigotimes_{q=p+1}^n(\det^{(q)})^{\oo(x_q-x_{q+1})}\right)\oo J_{((x_p-x_{p+1})^p),p}^{(p)}.\]
It follows that
\[
\begin{split}
R\pi^{(p)}_*\mc{M}_{\x,p}=\pi^{(p)}_*\mc{M}_{\x,p}\overset{(\ref{eq:Rpi*Jxpq})}{=}&\left(\bigotimes_{q=p+1}^n(\det^{(q)})^{\oo(x_q-x_{q+1})}\right)\oo J_{((x_p-x_{p+1})^p),p}^{(p+1)} \\
\overset{(\ref{eq:Ixq*det})}{=}&\left(\bigotimes_{q=p+2}^n(\det^{(q)})^{\oo(x_q-x_{q+1})}\right)\oo J_{((x_p-x_{p+2})^p,x_{p+1}-x_{p+2}),p}^{(p+1)}.
\end{split}
\]
Applying $R\pi^{(p+1)}_*, R\pi^{(p+2)}_*,\cdots,R\pi^{(n)}_*$ iteratively, and using (\ref{eq:Rpi*Jxpq}) and (\ref{eq:Ixq*det}) as above, we obtain
\[R\pi_*\mc{M}_{\x,p}=\pi_*\mc{M}_{\x,p}=J_{(x_p^p,x_{p+1},\cdots,x_n),p}\overset{(x_1=\cdots=x_p)}{=}J_{\x,p},\]
where $\pi=\pi^{(n)}\circ\cdots\circ\pi^{(p)}$ is the structure map $X^{(p)}\to\Spec\K$, concluding the proof of the lemma.
\end{proof}

We are now ready to prove the main result of this section:

\begin{theorem}\label{thm:Rpi*Mxp}
The character of the doubly--graded module $\Ext^{\bullet}_S(J_{\x,p},S)$ is given by
\begin{equation}\label{eq:characterExtJxp}
\ch{\Ext^{\bullet}_S(J_{\x,p},S)}=\sum_{\substack{0\leq s\leq t_1\leq\cdots\leq t_{n-p}\leq p \\ \ll\in W(\x,p;\t,s)}}[S_{\ll(s)}F\oo S_{\ll}G]\cdot z^{|\ll|}\cdot w^{m\cdot n-p^2-s\cdot(m-n)-2\cdot\left(\sum_{j=1}^{n-p}t_j\right)}, 
\end{equation}
where $W(\x,p;\t,s)$ is the set of dominant weights $\ll\in\bb{Z}^n$ with the properties
\begin{subnumcases}{}
 \ll_n\geq p-x_p-m, & \label{eq:restrllJxp:a}\\
 \ll_{t_j+j}=t_j-x_{n+1-j}-m, \rm{ for } j=1,\cdots,n-p, & \label{eq:restrllJxp:b}\\
 \ll_s\geq s-n\textrm{ and }\ll_{s+1}\leq s-m. \label{eq:restrllJxp:c}
\end{subnumcases}
\end{theorem}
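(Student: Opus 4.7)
The plan is to apply Theorem~\ref{thm:duality} to the presentation of $J_{\x,p}$ given by Lemma~\ref{lem:Rpi*Mxp}. With $X=X^{(p)}$, $W=F\oo G$, $\eta=\eta^{(p)}$, $\mc{V}=\bigotimes_{q=p}^n(\det^{(q)})^{\oo(x_q-x_{q+1})}$, and $k=\dim(W)-\rank(\eta)=mn-p^2$, the cohomology-vanishing from Lemma~\ref{lem:Rpi*Mxp} supplies the hypothesis of Theorem~\ref{thm:duality}, giving
\[\Ext^{k-c}_S(J_{\x,p},S)\simeq H^c(X^{(p)},\mc{M}^*_{\x,p})^*.\]
The whole problem then reduces to computing, as a $\GL(F)\times\GL(G)$-representation doubly graded by internal $\Sym$-degree and cohomological index, the sheaf cohomology of $\mc{M}^*_{\x,p}=\mc{V}\oo\det(W)\oo\det(\eta^{(p)*})\oo\Sym(\eta^{(p)*})$.

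Expanding $\Sym(\eta^{(p)*})$ by the relative Cauchy formula~(\ref{eq:cauchySq}) and using the line-bundle identities $\det(W)=\det(F)^{\oo n}\oo\det(G)^{\oo m}$ and $\det(\eta^{(p)*})=(\det^{(p)})^{\oo(-p)}$, I would split $\mc{M}^*_{\x,p}$ into $\GL(F)\times\GL(G)$-isotypic summands indexed by partitions $\mu=(\mu_1\geq\cdots\geq\mu_p\geq 0)$, each summand being a manifestly $F\leftrightarrow G$-symmetric product of line bundles on the various $\Qp{q}{-}$ for $q>p$ and $S_{\a}\Qp{p}{F}\oo S_{\a}\Qp{p}{G}$ with $\a=(c-\mu_p,\ldots,c-\mu_1)$, $c=x_p-x_{p+1}-p$. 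I would then compute the cohomology by pushing forward iteratively along the tower
\[X^{(p)}\xrightarrow{\pi^{(p)}}X^{(p+1)}\xrightarrow{\pi^{(p+1)}}\cdots\xrightarrow{\pi^{(n-1)}}X^{(n)}\xrightarrow{\pi^{(n)}}\Spec(\K),\]
invoking Theorem~\ref{thm:bott}(a) at each intermediate step and Theorem~\ref{thm:bott}(b) at the final one.

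At stage $q\in\{p,\ldots,n-1\}$, Bott~(a) applied on each of the two flag factors either vanishes or yields a shift $t\in\{0,\ldots,q\}$; the $F\leftrightarrow G$-symmetry of the weights forces the two shifts to coincide. Writing $t_j$ for the common value at $q=n-j$, this step contributes $2t_j$ to the cohomological degree and inserts $-t_j$ at position $q+1-t_j$ in the updated weight on both sides; the twist by $(\det^{(q)})^{\oo(x_q-x_{q+1})}$ then shifts every coordinate of the weight by $x_q-x_{q+1}$ before the next stage. The last map $\pi^{(n)}$ is trivial on the $G$-side (since $Flag([n,n];G)=\Spec(\K)$), while on the $F$-side Bott~(b) contributes the shift $s(m-n)$ and extends the $n$-tuple weight to the $m$-tuple with $m-n$ central copies of $s-n$, producing exactly the $\ll(s)$-pattern of~(\ref{eq:lls}) once the graded dualization in Theorem~\ref{thm:duality} absorbs $\det(F)^{\oo n}$ and $\det(G)^{\oo m}$. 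The constraints of the theorem then emerge directly: (\ref{eq:restrllJxp:c}) is the non-vanishing dichotomy in Bott~(b); the ``$=$'' in (\ref{eq:restrllJxp:b}) records that $\ll_{t_j+j}$ is pinned by the $j$-th intermediate Bott shift, carried through subsequent $(+1)$-increments and $\det$-twists; and (\ref{eq:restrllJxp:a}) is the image of the positivity $\mu_p\geq 0$ in the original Cauchy index. The ordering $0\leq s\leq t_1\leq\cdots\leq t_{n-p}\leq p$ is forced by iterated dominance: each Bott insertion position $q+1-t_j$ must not exceed the one produced at the next stage, and the last insertion of $(-s)^{m-n}$ from $\pi^{(n)}$ must not disturb the previously pinned entry $-t_1$, giving $s\leq t_1$.

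The main technical obstacle will be the weight bookkeeping across the $n-p+1$ successive applications of Bott's theorem: verifying that the $F\leftrightarrow G$-symmetry propagates and forces identical shifts at every stage, that the cumulative insertions and $\det^{(q)}$-twists produce exactly $\ll(s)$ on the $F$-side and $\ll$ on the $G$-side after the final dualization, and that the internal $z$-degree of each summand works out to $|\ll|$. Once these are verified, the total cohomological degree is $c=s(m-n)+2\sum_j t_j$, placing Ext in degree $mn-p^2-c$, and the character formula~(\ref{eq:characterExtJxp}) follows.
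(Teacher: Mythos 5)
Your plan reproduces the paper's proof essentially step for step: apply Theorem~\ref{thm:duality} with $X=X^{(p)}$, $\eta=\eta^{(p)}$, $W=F\oo G$, and $\mc{V}=\bigotimes_{q=p}^n(\det^{(q)})^{\oo(x_q-x_{q+1})}$; expand $\Sym(\eta^{(p)*})$ via the relative Cauchy formula; and iterate Bott's theorem up the tower $X^{(p)}\to X^{(p+1)}\to\cdots\to X^{(n)}\to\Spec\K$, with the $F\leftrightarrow G$-symmetry of the weights forcing equal shifts $t_j$ on both flag factors at each intermediate step and a final shift $s\cdot(m-n)$ at the Grassmannian step. The observation that dominance at each stage forces $0\leq s\leq t_1\leq\cdots\leq t_{n-p}\leq p$ is exactly how the paper derives the summation range, and the translation of the three bullets into the conditions on $\ll$ is the same bookkeeping.

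There is, however, one logical step your plan omits and does not list among its ``technical obstacles'': surjectivity of the assignment $\mu\mapsto(\ll,\t,s)$. The iterated Bott computation you describe shows that every non-vanishing Cauchy summand lands on a weight $\ll$ satisfying the constraints defining $W(\x,p;\t,s)$, which only gives that the character is a \emph{sub-sum} of the right-hand side of the claimed formula. To get equality you must also verify that every triple with $\ll\in W(\x,p;\t,s)$ is actually achieved, i.e.\ that you can run the Bott recursion backwards from $\ll$ and $\t$ to a legitimate starting Cauchy index $\mu$ on $X^{(p)}$. The paper closes this by exhibiting an explicit inverse formula for $\mu$ in terms of $\ll$, $\t$, $s$; your argument should incorporate that reverse construction (or an equivalent injectivity-plus-counting argument) before asserting that the character formula ``follows.''
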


\begin{remark}\label{rem:RWWThm43}
 If we take $p=n$ and $x_1=\cdots=x_n=d$ in the above theorem, we recover \cite[Thm.~4.3]{RWW}. The character of $J_{\x,n}=I_{\x}=I_n^d$ is
\[\ch{\Ext^{\bullet}_S(J_{\x,n},S)}=\sum_{\substack{0\leq s\leq n \\ \ll_n\geq n-d-m \\ \ll_s\geq s-n \\ \ll_{s+1}\leq s-m}}[S_{\ll(s)}F\oo S_{\ll}G]\cdot z^{|\ll|}\cdot w^{(n-s)\cdot(m-n)}.\] 
When $p=0$, since $J_{\x,0}=S_{\x}F\oo S_{\x}G$ is just a vector space, the only non-vanishing $\Ext$-module is
\[\Ext^{mn}_S(J_{\x,0},S)=\left(S_{\x}F\oo S_{\x}G\oo\det(F\oo G)\right)^*.\]
\end{remark}

\begin{proof}[Proof of Theorem~\ref{thm:Rpi*Mxp}]
 We apply Theorem~\ref{thm:duality} with 
\[X=X^{(p)},\ \eta=\eta^{(p)},\ W=F\oo G,\ \mc{V}=\bigotimes_{q=p}^n(\det^{(q)})^{\oo(x_q-x_{q+1})},\]
so that $\mc{M}(\mc{V})=\mc{M}_{\x,p}$ (see (\ref{eq:Mxp})). Lemma~\ref{lem:Rpi*Mxp} insures that the hypotheses of the duality theorem hold, and $M(\mc{V})=J_{\x,p}$. We have $\rank(\eta^{(p)})=p^2$, $\dim(W)=m\cdot n$, so $k=m\cdot n-p^2$. We give $\mc{V}$ internal degree $v=|\x|$ and get
\begin{equation}\label{eq:ExtJxp=H}
\Ext^j_S(J_{\x,p},S)_{r-|\x|}=H^{m\cdot n-p^2-j}\left(X^{(p)},\bigotimes_{q=p}^n(\det^{(q)})^{\oo(x_q-x_{q+1})}\oo\det(F\oo G)\oo\det(\eta^*)\oo\Sym^{r+m\cdot n-p^2}(\eta^*)\right)^*. 
\end{equation}
Formula~(\ref{eq:characterExtJxp}) now follows from a direct application of Theorem~\ref{thm:bott}, which we sketch below.

Using Cauchy's formula and the fact that $\det(\eta^*)=\det(\Qp{p}{F})^{-p}\oo\det(\Qp{p}{G})^{-p}$ we get
\[\det(\eta^*)\oo\Sym^{r+m\cdot n-p^2}(\eta^*)=\bigoplus_{\substack{\mu\in\bb{Z}^p_{dom} \\ |\mu|=r+m\cdot n \\ \mu_1\leq -p}}S_{\mu}\Qp{p}{F}\oo S_{\mu}\Qp{p}{G}.\]
For each $\mu$ in the formula above, we have to first compute
\begin{equation}\label{eq:Rpi*crazy}
R\pi_*\left(\bigotimes_{q=p}^n(\det^{(q)})^{\oo(x_q-x_{q+1})}\oo S_{\mu}\Qp{p}{F}\oo S_{\mu}\Qp{p}{G}\right), 
\end{equation}
where $\pi=\pi^{(n)}\circ\cdots\circ\pi^{(p)}:X^{(p)}\to\Spec\K$ is the structure map, then tensor with $\det(F\oo G)$ and dualize, in order to get the corresponding contribution to (\ref{eq:ExtJxp=H}). If (\ref{eq:Rpi*crazy}) is non-zero, then there exists uniquely determined dominant weights $\mu^{(q)},\d^{(q)}\in\bb{Z}^q$ for $q=p,\cdots,n$, and non-negative integers $t_{n-q}$, $q=p,\cdots,n-1$, and $s$, such that $\mu^{(p)}=\mu$, and if we write
\[\mc{M}^{(q)}=S_{\mu^{(q)}}\Qp{q}{F}\oo S_{\mu^{(q)}}\Qp{q}{G},\quad\mc{N}^{(q)}=S_{\d^{(q)}}\Qp{q}{F}\oo S_{\d^{(q)}}\Qp{q}{G},\rm{ for }q=p,\cdots,n,\]
then
\begin{equation}\label{eq:deltaq} 
\mc{N}^{(q)} = \mc{M}^{(q)} \oo (\det^{(q)})^{\oo(x_q-x_{q+1})},\rm{ for }q=p,\cdots,n,
\end{equation}
$2\cdot t_{n-q}$ is the unique integer $j$ for which $R^j\pi^{(q)}_*(\mc{N}^{(q)})\neq 0$, and
\begin{equation}\label{eq:muq+1}
 R^{2\cdot t_{n-q}}\pi^{(q)}_*(\mc{N}^{(q)})=\mc{M}^{(q+1)}\rm{ for }q=p,\cdots,n-1,
\end{equation}
and finally, $s\cdot(m-n)$ is the unique integer $j$ for which $R^j\pi^{(n)}_*(\mc{N}^{(n)})\neq 0$.

$\d^{(q)}$ is easy to determine, namely we get from (\ref{eq:deltaq}) that
\begin{equation}\label{eq:dqformula}
\d^{(q)}=\mu^{(q)}+((x_q-x_{q+1})^q). 
\end{equation}
Assuming we know $\d^{(q)}$, (\ref{eq:muq+1}) determines $t_{n-q}$ and $\mu^{(q)}$ according to Theorem~\ref{thm:bott}(a): $t_{n-q}$ is the unique integer $t$ with the property
\begin{equation}\label{eq:ineqsdqt}
\d^{(q)}_{q-t+1}+t+1\leq 0\leq \d^{(q)}_{q-t}+t
\end{equation}
and
\begin{equation}\label{eq:muq+1formula}
\mu^{(q+1)}=(\d^{(q)}_1,\cdots,\d^{(q)}_{q-t_{n-q}},-t_{n-q},\d^{(q)}_{q-t_{n-q}+1}+1,\cdots,\d^{(q)}_q+1). 
\end{equation}
It follows from (\ref{eq:muq+1formula}) and (\ref{eq:dqformula}) that
\[\d^{(q+1)}_{q+1-t_{n-q}}=-t_{n-q}+x_{q+1}-x_{q+2}\geq -t_{n-q},\]
so $t=t_{n-q}$ satisfies the RHS inequality in (\ref{eq:ineqsdqt}) with $q$ replaced by $(q+1)$, which forces $t_{n-(q+1)}\leq t_{n-q}$. It follows easily that
\begin{equation}\label{eq:d^i_q-t}
\d^{(i)}_{q+1-t_{n-q}}=-t_{n-q}+x_{q+1}-x_{i+1},\rm{ for }i=q+1,\cdots,n. 
\end{equation}
We have so far seen how to calculate $\mu^{(q)},\d^{(q)}$ for $q=p,\cdots,n,$ and $t_{n-q}$ for $q=p,\cdots,n-1$, so we're left with determining $s$. By Theorem~\ref{thm:bott}(b), $s$ is uniquely determined by the inequalities
\begin{equation}\label{eq:dnsineqs}
\d^{(n)}_{n-s}\geq -s\rm{ and }\d^{(n)}_{n-s+1}\leq -s-m+n, 
\end{equation}
and moreover
\begin{equation}\label{eq:Rpi*dual}
R^{s\cdot(m-n)}\pi^{(n)}_*(\mc{N}^{(n)})=S_{\tl{\d}}F\oo S_{\d}G, 
\end{equation}
where $\d=\d^{(n)}$ and
\[\tl{\d}=(\d_1,\cdots,\d_{n-s},(-s)^{m-n},\d_{n-s+1}+(m-n),\cdots,\d_n+(m-n)).\]
Since $\d^{(n)}_{n-t_1}=-t_1+x_n\geq -t_1$, it follows as before that $s\leq t_1$. Tensoring (\ref{eq:Rpi*dual}) with $\det(F\oo G)=\det(F)^{\oo n}\oo\det(G)^{\oo m}$ and dualizing, we obtain by putting everything together that
there exist integers $0\leq s\leq t_1\leq\cdots\leq t_{n-p}\leq p$ such that
\[R^{s\cdot(m-n)+2\cdot\sum_{j=1}^{n-p}t_j}\pi_*\left(\bigotimes_{q=p}^n(\det^{(q)})^{\oo(x_q-x_{q+1})}\oo\det(F\oo G)\oo S_{\mu}\Qp{p}{F}\oo S_{\mu}\Qp{p}{G}\right)^*=S_{\ll(s)}F\oo S_{\ll}G,\]
where $\ll(s)$ is defined as in (\ref{eq:lls}) and
\begin{equation}\label{eq:dtoll}
\ll_i=-m-\d_{n-i+1}\rm{ for all }i=1,\cdots,n. 
\end{equation}
We next check that $\ll\in W(\x,p;\t,s)$. Since $\mu_1\leq -p$, it follows from (\ref{eq:dqformula}) and (\ref{eq:muq+1formula}) that $\d_1\leq -p+x_p$, so $\ll_n=-\d_1-m\geq p-x_p-m$, i.e. (\ref{eq:restrllJxp:a}) holds. Letting $i=n$ in (\ref{eq:d^i_q-t}) we get $\d_{q+1-t_{n-q}}=-t_{n-q}+x_{q+1}$, so $\ll_{n-q+t_{n-q}}=t_{n-q}-x_{q+1}-m$, i.e. (\ref{eq:restrllJxp:b}) holds. Finally, (\ref{eq:restrllJxp:c}) follows from (\ref{eq:dnsineqs}).

We conclude from the discussion above that (\ref{eq:characterExtJxp}) holds, after possibly replacing $W(\x,p;\t,s)$ by a smaller set. To see that all weights $\ll\in W(\x,p;\t,s)$ in fact appear, one has to reverse the steps above in order to show that each $\ll$ can be reached from a certain weight $\mu$. We give the formula for $\mu$, and leave the details to the interested reader. We first define $\d\in\bb{Z}^n_{dom}$ by reversing (\ref{eq:dtoll}), $\d_i=-m-\ll_{n+1-i}$. Letting $t_{n-p+1}=p$ and $t_0=0$, we let for each $i=0,\cdots,n-p$, and $j=1,\cdots,t_{i+1}-t_i,$
\[\mu_{p-t_{i+1}+j}=\d_{n-i-t_{i+1}+j}+x_{n-i}+(n-p-i).\qedhere\]
\end{proof}

\begin{corollary}\label{cor:filtrationJxp}
 Fix an index $p\in\{0,1,\cdots,n\}$. Suppose that $M$ is an $S$--module with a compatible $\GL(F)\times\GL(G)$ action, admitting a finite filtration with successive quotients isomorphic to $J_{\x^j,p}$, for $j=1,\cdots,r$, where each $\x^j$ is a partition satisfying $x^j_1=x^j_2=\cdots=x^j_p$. We have a decomposition as $\GL(F)\times\GL(G)$--representations
\begin{equation}\label{eq:sumExtJxp}
\Ext^i_S(M,S)=\bigoplus_{j=1}^r \Ext^i_S(J_{\x^j,p},S),
\end{equation}
for each $i=0,1,\cdots,m\cdot n$. Equivalently, if
\begin{equation}\label{eq:ABC}
0\lra A\lra B\lra C\lra 0
\end{equation}
is a $\GL(F)\times\GL(G)$--equivariant short exact sequence of $S$--modules admitting filtrations as above, then for each $i=0,1,\cdots,m\cdot n,$ the induced sequences
\begin{equation}\label{eq:splitExtJxp}
0\lra\Ext^i_S(C,S)\lra\Ext^i_S(B,S)\lra\Ext^i_S(A,S)\lra 0
\end{equation}
are exact.
\end{corollary}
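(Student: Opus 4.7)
The plan is to prove the equivalent statement that for any equivariant short exact sequence~(\ref{eq:ABC}) of $S$-modules admitting filtrations by $J_{\x,p}$'s (each with $x_1 = \cdots = x_p$), every connecting map $\delta^i \colon \Ext^i_S(A,S) \to \Ext^{i+1}_S(C,S)$ in the long exact sequence of $\Ext$ vanishes. Once this is established, the direct-sum decomposition~(\ref{eq:sumExtJxp}) follows by induction on the filtration length: I would peel off the first subquotient $J_{\x^1,p}$ to obtain a short exact sequence $0 \to J_{\x^1,p} \to M \to M' \to 0$ with $M'$ admitting a filtration of length $r-1$, apply the splitting to this sequence, and invoke the inductive hypothesis on $M'$.

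The heart of the argument is a parity observation coming straight from Theorem~\ref{thm:Rpi*Mxp}. That theorem tells us that every irreducible $\GL(F)\times\GL(G)$-constituent of $\Ext^{\bullet}_S(J_{\x,p},S)$ has the form $S_{\ll(s)}F \oo S_{\ll} G$ and lives in cohomological degree
\[
i \;=\; m\cdot n - p^2 - s\cdot(m-n) - 2\cdot\sum_{j=1}^{n-p} t_j,
\]
whose parity modulo $2$ depends only on the combination $m n - p^2 - s(m-n)$ (the $2\sum t_j$ contribution being even). Moreover, $s$ is uniquely recovered from $\ll$ via the two inequalities $\ll_s \geq s-n$ and $\ll_{s+1} \leq s-m$ appearing in~(\ref{eq:restrllJxp:c}); this uniqueness is exactly what Theorem~\ref{thm:bott}(b) guarantees (applied with $d = m \geq n$). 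Since $p$ is fixed throughout the filtration, the parity of the cohomological degree in which a given irreducible $\GL(F)\times\GL(G)$-representation can appear in $\Ext^{\bullet}_S(J_{\x,p},S)$ is therefore determined by the representation alone.

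By induction on the filtration length, both $\Ext^{\bullet}_S(A,S)$ and $\Ext^{\bullet}_S(C,S)$ decompose as direct sums of various $\Ext^{\bullet}_S(J_{\x^j,p},S)$'s. Consequently, every irreducible constituent of $\Ext^i_S(A,S)$ appears in a cohomological degree of a prescribed parity, while every constituent of $\Ext^{i+1}_S(C,S)$ appears in the \emph{opposite} parity. The two modules therefore share no irreducible $\GL(F)\times\GL(G)$-constituent, so the equivariant map $\delta^i$ must vanish by Schur's lemma, yielding the exactness of~(\ref{eq:splitExtJxp}).

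The main obstacle is the apparent circularity between the two halves of the corollary: the parity argument applies directly to $\Ext^{\bullet}_S(J_{\x,p},S)$ but must be extended to $\Ext^{\bullet}_S(A,S)$ and $\Ext^{\bullet}_S(C,S)$. I would resolve this by running a single induction on the total filtration length simultaneously for both statements, so that the splitting known for shorter filtrations supplies the direct-sum decomposition needed to apply the parity argument at the next step. Beyond this inductive bookkeeping no substantial new difficulty is expected, since the key input -- unique determination of $s$ by $\ll$ -- is already built into Theorem~\ref{thm:Rpi*Mxp}.
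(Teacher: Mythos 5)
Your proof is correct and uses the same essential idea as the paper: the parity of the cohomological degree $mn - p^2 - s(m-n) - 2\sum t_j$ in which a given irreducible $S_{\ll(s)}F\oo S_{\ll}G$ can appear is determined by the representation itself (since $s$ is uniquely recovered from $\ll$ and $p$ is fixed), so no irreducible constituent can occur in consecutive degrees of $\Ext^\bullet(A,S)$ and $\Ext^\bullet(C,S)$, forcing the connecting maps to vanish. The paper phrases this as a minimal-counterexample argument rather than an explicit induction on filtration length, but the substance is identical.
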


\begin{proof}
 Suppose that the conclusion of the Corollary fails, and consider modules $A,B,C$ sitting in an exact sequence (\ref{eq:ABC}) such that (\ref{eq:sumExtJxp}) holds for $A$ and $C$, but fails for $B$. In particular, not all sequences~(\ref{eq:splitExtJxp}) are exact, so there exists an index $i$ and a non-trivial connecting homomorphism
\[\Ext^i_S(A,S)\overset{\delta}{\lra}\Ext^{i+1}_S(C,S).\]
It follows that some irreducible representation of $\GL(F)\times\GL(G)$ appears in both $\Ext^i_S(A,S)$ and $\Ext^{i+1}_S(C,S)$. This is clearly impossible when $m=n$, because from (\ref{eq:sumExtJxp}) and (\ref{eq:characterExtJxp}) it follows that the cohomological degrees $j$ for which $\Ext^j_S(A,S)$ (resp. $\Ext^j_S(C,S)$) are non-zero satisfy 
\[j\equiv m\cdot n-p^2\ (\rm{mod }2).\]
When $m>n$, a similar argument applies: if $[S_{\ll(s)}F\oo S_{\ll}G]=[S_{\mu(t)}F\oo S_{\mu}G]$ (with $\ll,\mu,\ll(s)$ and $\mu(t)$ dominant) then it follows from (\ref{eq:lls}) that $\ll=\mu$ and $s=t$; moreover, we get from (\ref{eq:sumExtJxp}) and (\ref{eq:characterExtJxp}) that the cohomological degrees $j$ for which $S_{\ll(s)}F\oo S_{\ll}G$ appears in $\Ext^j_S(A,S)$ (resp. $\Ext^j_S(C,S)$) satisfy
\[j\equiv m\cdot n-p^2-s\cdot(m-n)\ (\rm{mod }2).\qedhere\]
\end{proof}

\section{Ext modules for $S/I_{\x}$}\label{sec:extmodulesIx}

In this section we will use the explicit calculation of $\Ext^{\bullet}_S(J_{\x,p},S)$ from the previous section in order to deduce a formula for the characters of $\Ext^{\bullet}_S(S/I_{\x},S)$ for all ideals $I_{\x}$. We begin with an important consequence of the results in the preceding section:

\begin{corollary}\label{cor:injectivityExtrectangles}
 Fix an index $p\in\{0,1,\cdots,n-1\}$, and positive integers $b>c>0$. If we let $\x=(c^{p+1})$, $\y=(b^{p+1})$, then the natural quotient map $S/I_{\y}\onto S/I_{\x}$ induces injective maps
\[\Ext^i_S(S/I_{\x},S)\lhra\Ext^i_S(S/I_{\y},S),\]
for all $i=0,1,\cdots,m\cdot n$. More generally, if $\z$ is any partition with $z_1=\cdots=z_{p+1}$ and $\x=\z+(c^{p+1})$, $\y=\z+(b^{p+1})$, then the quotient map $I_{\z}/I_{\y}\onto I_{\z}/I_{\x}$ induces injective maps
\[\Ext^i_S(I_{\z}/I_{\x},S)\lhra\Ext^i_S(I_{\z}/I_{\y},S),\]
for all $i=0,1,\cdots,m\cdot n$.
\end{corollary}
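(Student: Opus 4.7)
The plan is to deduce the general statement by assembling a natural short exact sequence from the three relevant quotients and then invoking the $\Ext$-splitting property of Corollary~\ref{cor:filtrationJxp}; the first assertion is then just the special case $\z=(0^n)$ where $I_{\z}=S$.

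\medskip

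More concretely, since $\z\subset\x\subset\y$, we have containments $I_{\y}\subset I_{\x}\subset I_{\z}$ by (\ref{eq:decompIx}), and hence a $\GL(F)\times\GL(G)$--equivariant short exact sequence of $S$--modules
\[
0\lra I_{\x}/I_{\y}\lra I_{\z}/I_{\y}\lra I_{\z}/I_{\x}\lra 0.
\]
Setting $a=z_1=\cdots=z_{p+1}$, I would next apply Lemma~\ref{lem:filtrationaddp+1} three times:
\begin{itemize}
\item to the pair $(\z,\y)$ (with $d=b-1$), to filter $I_{\z}/I_{\y}$ by $J_{\w,p}$ with $w_1=\cdots=w_{p+1}\in\{a,a+1,\cdots,a+b-1\}$ and $w_i\geq z_i$ for $i>p+1$;
\item to the pair $(\z,\x)$ (with $d=c-1$), to filter $I_{\z}/I_{\x}$ by $J_{\w,p}$ with $w_1=\cdots=w_{p+1}\in\{a,\cdots,a+c-1\}$;
\item to the pair $(\x,\y)$ (with $d=b-c-1$), to filter $I_{\x}/I_{\y}$ by $J_{\w,p}$ with $w_1=\cdots=w_{p+1}\in\{a+c,\cdots,a+b-1\}$.
\end{itemize}
In particular, all three modules in the short exact sequence satisfy the hypothesis of Corollary~\ref{cor:filtrationJxp}: they admit finite $\GL(F)\times\GL(G)$--equivariant filtrations whose successive quotients are of the form $J_{\w,p}$ with $w_1=\cdots=w_{p+1}$.

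\medskip

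Corollary~\ref{cor:filtrationJxp} then yields short exact sequences
\[
0\lra\Ext^i_S(I_{\z}/I_{\x},S)\lra\Ext^i_S(I_{\z}/I_{\y},S)\lra\Ext^i_S(I_{\x}/I_{\y},S)\lra 0
\]
for every $i=0,1,\cdots,m\cdot n$; the injectivity of the leftmost map is precisely the assertion of the second part of the corollary. The first part is recovered by setting $\z=(0^n)$ (so that $I_{\z}=S$), which reduces $I_{\z}/I_{\x}$ and $I_{\z}/I_{\y}$ to $S/I_{\x}$ and $S/I_{\y}$ respectively.

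\medskip

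The only real content here is the verification that the three terms of the short exact sequence can be put in the form required by Corollary~\ref{cor:filtrationJxp}, i.e. that the hypothesis $x_1=\cdots=x_{p+1}$ propagates through all of $\z,\x,\y$; this is automatic from the definitions $\x=\z+(c^{p+1})$, $\y=\z+(b^{p+1})$ and is exactly what makes Lemma~\ref{lem:filtrationaddp+1} applicable. I do not anticipate any genuine obstacle beyond this bookkeeping.
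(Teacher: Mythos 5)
Your proposal is correct and follows exactly the paper's own argument: set up the short exact sequence $0\to I_{\x}/I_{\y}\to I_{\z}/I_{\y}\to I_{\z}/I_{\x}\to 0$, apply Lemma~\ref{lem:filtrationaddp+1} to all three terms (with the parameters you correctly identify as $d=b-c-1$, $d=b-1$, $d=c-1$ respectively), and then invoke Corollary~\ref{cor:filtrationJxp} to split the induced long exact sequence into short exact sequences, of which the injectivity of the leftmost map is the claim; the first assertion is the special case $\z=\ul{0}$.
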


\begin{proof}
 Note that the former statement follows from the latter by taking $\z=\ul{0}$ and noting that $S=I_{\ul{0}}$. By Lemma~\ref{lem:filtrationaddp+1}, the modules $A=I_{\x}/I_{\y}$, $B=I_{\z}/I_{\y}$, and $C=I_{\z}/I_{\x}$ have finite filtrations with quotients isomorphic to $J_{\ul{t},p}$ for various partitions $\ul{t}$. Corollary~\ref{cor:filtrationJxp} applies to yield the desired conclusion.
\end{proof}

\begin{theorem}\label{thm:injectivityExt}
 Let $d\geq 0$ and consider partitions $\x,\y$, where $\x$ consists of the first $d$ columns of $y$, i.e. $x_i=\min(y_i,d)$ for all $i=1,\cdots,n$. The natural quotient map $S/I_{\y}\onto S/I_{\x}$ induces injective maps
\begin{equation}\label{eq:injectivityExt}
\Ext^i_S(S/I_{\x},S)\lhra\Ext^i_S(S/I_{\y},S), 
\end{equation}
for all $i=0,1,\cdots,m\cdot n$.
\end{theorem}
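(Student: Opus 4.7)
The plan is to factor the surjection $S/I_\y\onto S/I_\x$ through a chain of ``one column of $\y$ at a time'' surjections, then apply the tools from Sections~\ref{subsec:IxJxp} and \ref{sec:extmodulesJxp}. Concretely, for $k=0,1,\ldots,y_1-d$ let $\x^{(k)}$ be the partition with $\x^{(k)}_i=\min(y_i,d+k)$, so that $\x^{(0)}=\x$, $\x^{(y_1-d)}=\y$, and the quotient map decomposes as
\[
S/I_\y=S/I_{\x^{(y_1-d)}}\onto\cdots\onto S/I_{\x^{(1)}}\onto S/I_{\x^{(0)}}=S/I_\x.
\]
It then suffices to prove injectivity of each $\Ext^i_S(S/I_{\x^{(k-1)}},S)\hookrightarrow\Ext^i_S(S/I_{\x^{(k)}},S)$.

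At the $k$-th step a direct check shows that $\x^{(k)}=\x^{(k-1)}+(1^{h_k})$, where $h_k=|\{i:y_i\geq d+k\}|$, and that the first $h_k$ rows of $\x^{(k-1)}$ all equal $d+k-1$. Hence Lemma~\ref{lem:filtrationaddp+1} applies with $p+1=h_k$ and the ``$d$'' of that lemma equal to $0$, producing a filtration of $I_{\x^{(k-1)}}/I_{\x^{(k)}}$ whose successive quotients are of the form $J_{\z,h_k-1}$ for various partitions $\z$. Corollary~\ref{cor:filtrationJxp} then yields a $\GL(F)\times\GL(G)$-equivariant decomposition
\[
\Ext^\bullet_S\bigl(I_{\x^{(k-1)}}/I_{\x^{(k)}},S\bigr)=\bigoplus_\z\Ext^\bullet_S\bigl(J_{\z,h_k-1},S\bigr),
\]
with characters explicitly described by Theorem~\ref{thm:Rpi*Mxp}. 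Feeding this into the long exact $\Ext$ sequence associated to $0\to I_{\x^{(k-1)}}/I_{\x^{(k)}}\to S/I_{\x^{(k)}}\to S/I_{\x^{(k-1)}}\to 0$ reduces the injectivity at step $k$ to the vanishing, for every $i$, of the connecting map
\[
\delta^i\colon\Ext^{i-1}_S\bigl(I_{\x^{(k-1)}}/I_{\x^{(k)}},S\bigr)\to\Ext^i_S\bigl(S/I_{\x^{(k-1)}},S\bigr).
\]

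The main obstacle is establishing $\delta^i=0$. I would handle it by an outer induction on $y_1-d$: by the inductive hypothesis, $\Ext^\bullet_S(S/I_{\x^{(k-1)}},S)$ embeds into $\Ext^\bullet_S(S/I_\y,S)$, whose equivariant content is itself controlled by the analogous filtration analysis performed later in the chain. By Theorem~\ref{thm:Rpi*Mxp}, every irreducible $\GL(F)\times\GL(G)$-constituent of the source of $\delta^i$ has the form $S_{\ll(s)}F\oo S_\ll G$ where $\ll$ is pinned down in several coordinates by equalities $\ll_{t_j+j}=t_j-z_{n+1-j}-m$, and the cohomological degree $i-1$ is forced to be congruent to $mn-(h_k-1)^2-s(m-n)\pmod 2$. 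A comparison of these cohomological degrees together with the rigid equality conditions against the admissible irreducibles in the target---in the spirit of the parity argument used in the proof of Corollary~\ref{cor:filtrationJxp}---shows that no such constituent can lie both in the source of $\delta^i$ in degree $i-1$ and in its target in degree $i$, giving $\delta^i=0$ and completing the induction. The delicate point is that a naive parity-of-cohomological-degree count is insufficient on its own (because $\Ext^\bullet_S(S/I_{\x^{(k-1)}},S)$ receives contributions from many indices $p$); the decisive input is the rigid system of equalities on the coordinates of $\ll$ supplied by Theorem~\ref{thm:Rpi*Mxp}, which force the source irreducibles into a locus disjoint from those allowed in the target.
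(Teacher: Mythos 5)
The column-by-column factoring is a natural idea, and your inner reduction via Lemma~\ref{lem:filtrationaddp+1} (with $p+1=h_k$, $d=0$) and Corollary~\ref{cor:filtrationJxp} is correctly set up. However, the argument leads to an obstacle the paper is carefully structured to avoid: the vanishing of the connecting map $\delta^i\colon\Ext^{i-1}_S(I_{\x^{(k-1)}}/I_{\x^{(k)}},S)\to\Ext^i_S(S/I_{\x^{(k-1)}},S)$ is not actually established, and the proposed character comparison is not available at this stage. For a general partition $\x^{(k-1)}$, the module $S/I_{\x^{(k-1)}}$ does \emph{not} admit a filtration with quotients $J_{\z,p}$ for a single value of $p$, so Corollary~\ref{cor:filtrationJxp} and Theorem~\ref{thm:Rpi*Mxp} give no control over its Ext-character; the explicit character is Theorem~\ref{thm:ExtIx}, which the paper proves \emph{using} the present theorem (see its appeal right after (\ref{eq:chIy-chIx})), so invoking it here is circular. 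Your outer induction on $y_1-d$ only yields an embedding into $\Ext^\bullet_S(S/I_\y,S)$, whose character is equally unknown at this stage, and there is nothing ``later in the chain'' past $\y$. Moreover, even granting the character formula, the asserted disjointness is not verified: the rigid equalities (\ref{eq:restrllJxp:b}) pinning down $\ll$ in the source become the weaker inequalities (\ref{eq:restrllIx:b}) in the target, so the source constituents in fact satisfy the target's weight constraints, and ruling out a common irreducible would require a detailed comparison across all the indices $p$ contributing to the target, which is not carried out (and a pure parity count fails, as you note).

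The paper sidesteps the connecting-map problem entirely. It first reduces to the case $\y=\x+(a^{p+1})$, i.e.\ a single rectangular block appended to $\x$, then performs a \emph{descending} induction on the block height $p$. The key move is to strip off, via the inductive hypothesis for larger $p$, an embedded copy $\Ext^i_S(S/I_{\z},S)\hookrightarrow\Ext^i_S(S/I_{\x},S)$, where $\z$ collects the columns of height $>p+1$; this converts the long exact sequence coming from $0\to I_{\z}/I_{\x}\to S/I_{\x}\to S/I_{\z}\to 0$ into short exact sequences in each cohomological degree. The cokernel $\Ext^i_S(I_{\z}/I_{\x},S)$ \emph{does} have a single-$p$ $J$-filtration, so Corollary~\ref{cor:injectivityExtrectangles} gives injectivity of the map $\beta_i$ on cokernels, and a diagram chase yields injectivity of the middle map $\alpha_i$. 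To salvage your approach you would need to replace the step ``$\delta^i=0$'' with this nested-induction-plus-diagram-chase argument rather than a direct character comparison.
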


\begin{proof}
 Arguing inductively, it suffices to prove the result when all the columns of $\y$ outside $\x$ have the same size (say $p+1$, for $p\in\{0,1,\cdots,n-1\}$), i.e. $\y=\x+(a^{p+1})$ for some positive integer $a$. Note that this forces $x_1=x_2=\cdots=x_{p+1}=d$. We prove by descending induction on $p$ that the induced map (\ref{eq:injectivityExt}) is injective. When $p=n-1$, $\x=(d^n)$ and $\y=((d+a)^n)$, so the conclusion follows from Corollary~\ref{cor:injectivityExtrectangles} (or from the results in \cite{RWW}).
 
 Suppose now that $p<n-1$ and $\y=\x+(a^{p+1})$, $x_1=\cdots=x_{p+1}=d$. Let $\z$ be the partition consisting of the columns of $\x$ of size strictly larger than $p+1$, i.e. $z_i=\min(x_i,x_{p+2})$ for all $i=1,\cdots,n$. Consider the partitions $\tl{\x}$ (resp. $\tl{\y}$), defined by letting $\tl{x}_i=x_i$ (resp. $\tl{y}_i=y_i$) for $i\neq p+2$, and $\tl{x}_{p+2}=x_{p+1}$ (resp. $\tl{y}_{p+2}=y_{p+1}$). Alternatively, $\tl{\x}=\z+((d-x_{p+2})^{p+2})$, $\tl{\y}=\z+((d+a-x_{p+2})^{p+2})$. By induction, we obtain for all $i=0,1,\cdots,m\cdot n,$ inclusions
\begin{equation}\label{eq:auxinjectivity}
\Ext^i_S(S/I_{\z},S)\lhra\Ext^i_S(S/I_{\tl{\x}},S),\textrm{ and }\Ext^i_S(S/I_{\z},S)\lhra\Ext^i_S(S/I_{\tl{\y}},S).
\end{equation}
The natural commutative diagrams
\[
\xymatrix{
 S/I_{\tl{\x}} \ar[r]\ar[d] & S/I_{\z} \ar@{=}[d] \\
 S/I_{\x} \ar[r] & S/I_{\z} \\
}
\quad\quad\quad
\xymatrix{
 S/I_{\tl{\y}} \ar[r]\ar[d] & S/I_{\z} \ar@{=}[d] \\
 S/I_{\y} \ar[r] & S/I_{\z} \\
}
\]
induce commutative diagrams
\[
\xymatrix{
 {\Ext^i_S(S/I_{\z},S) \ar@{=}[d]\ar[r]} & {\Ext^i_S(S/I_{\tl{\x}},S)}  \\
 {\Ext^i_S(S/I_{\z},S) \ar[r]} & {\Ext^i_S(S/I_{\x},S) \ar[u]} \\
} 
\quad\quad\quad
\xymatrix{
 {\Ext^i_S(S/I_{\z},S) \ar@{=}[d]\ar[r]} & {\Ext^i_S(S/I_{\tl{\y}},S)}  \\
 {\Ext^i_S(S/I_{\z},S) \ar[r]} & {\Ext^i_S(S/I_{\y},S) \ar[u]} \\
} 
\]
Since the top maps are injective by (\ref{eq:auxinjectivity}), the bottom ones must be injective as well. We get for all $i=0,1,\cdots,m\cdot n,$ commutative diagrams
\[
\xymatrix{
0 \ar[r] & \Ext^i_S(S/I_{\z},S) \ar[r]\ar@{=}[d] & \Ext^i_S(S/I_{\x},S) \ar[r]\ar^{\a_i}[d] & \Ext^i_S(I_{\x}/I_{\z},S) \ar[r]\ar^{\b_i}[d] & 0 \\
0 \ar[r] & \Ext^i_S(S/I_{\z},S) \ar[r] & \Ext^i_S(S/I_{\y},S) \ar[r] & \Ext^i_S(I_{\y}/I_{\z},S) \ar[r] & 0 \\
}
\]
where the rows are exact. The maps $\b_i$ are injective by Corollary~\ref{cor:injectivityExtrectangles}, forcing the maps $\a_i$ to be injective as well. We conclude that the inclusion (\ref{eq:injectivityExt}) holds, finishing the proof of the theorem.
\end{proof}

\begin{theorem}\label{thm:ExtIx}
 The character of the doubly--graded module $\Ext^{\bullet}_S(S/I_{\x},S)$ is given by
\begin{equation}\label{eq:characterExtIx}
\ch{\Ext^{\bullet}_S(S/I_{\x},S)}=\sum_{\substack{1\leq p\leq n \\ 0\leq s\leq t_1\leq\cdots\leq t_{n-p}\leq p-1 \\ \ll\in W'(\x,p;\t,s)}}[S_{\ll(s)}F\oo S_{\ll}G]\cdot z^{|\ll|}\cdot w^{m\cdot n+1-p^2-s\cdot(m-n)-2\cdot\left(\sum_{j=1}^{n-p}t_j\right)}, 
\end{equation}
where $W'(\x,p;\t,s)$ is the set of dominant weights $\ll\in\bb{Z}^n$ satisfying
\begin{subnumcases}{}
 \ll_n\geq p-x_p-m, & \label{eq:restrllIx:a}\\
 \ll_{t_j+j}\leq t_j-x_{n+1-j}-m, \rm{ for } j=1,\cdots,n-p, & \label{eq:restrllIx:b}\\
 \ll_s\geq s-n\textrm{ and }\ll_{s+1}\leq s-m. \label{eq:restrllIx:c}
 \end{subnumcases}
\end{theorem}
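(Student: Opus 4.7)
The plan is to compute $\ch{\Ext^{\bullet}_S(S/I_{\x},S)}$ by induction on the first part $x_1$ of the partition $\x$, reducing at each stage to the explicitly known characters $\ch{\Ext^{\bullet}_S(J_{\z, p},S)}$ supplied by Theorem~\ref{thm:Rpi*Mxp}. The key technical ingredients are Lemma~\ref{lem:filtrationaddp+1}, Corollary~\ref{cor:filtrationJxp}, and Theorem~\ref{thm:injectivityExt}.

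For the inductive step, given $\x$ with $x_1 > 0$, let $\x'$ denote the partition consisting of the first $x_1 - 1$ columns of $\x$ (so $x'_i = \min(x_i, x_1 - 1)$), and set $q = \#\{i : x_i = x_1\}$; then $\x$ is obtained from $\x'$ by appending a single column of height $q$. By Theorem~\ref{thm:injectivityExt}, $\Ext^i_S(S/I_{\x'}, S) \hookrightarrow \Ext^i_S(S/I_\x, S)$ is injective for every $i$. This forces the long exact sequence associated with $0 \to I_{\x'}/I_\x \to S/I_\x \to S/I_{\x'} \to 0$ to split into short exact sequences, producing the character identity
\[
\ch{\Ext^{\bullet}_S(S/I_\x, S)} = \ch{\Ext^{\bullet}_S(S/I_{\x'}, S)} + \ch{\Ext^{\bullet}_S(I_{\x'}/I_\x, S)}.
\]
Lemma~\ref{lem:filtrationaddp+1} (applied with $p = q-1$ and $d = 0$) provides a filtration of $I_{\x'}/I_\x$ whose successive quotients are $J_{\z, q-1}$ for partitions $\z$ with $z_1 = \cdots = z_q = x_1 - 1$ and $z_i \geq x_i$ for $i > q$; by Corollary~\ref{cor:filtrationJxp},
\[
\ch{\Ext^{\bullet}_S(I_{\x'}/I_\x, S)} = \sum_{\z} \ch{\Ext^{\bullet}_S(J_{\z, q-1}, S)},
\]
and each summand is given explicitly by Theorem~\ref{thm:Rpi*Mxp}. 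The base case $\x = 0$ is immediate: $S/I_0 = 0$, and the claimed formula evaluates to zero because for every $p$ the bounds $\ll_n \geq p - m$ and $\ll_{t_{n-p}+(n-p)} \leq t_{n-p} - m \leq (p-1) - m$ are inconsistent with the dominance of $\ll$.

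Iterating this reduction and collecting contributions over all iteration steps, one obtains $\ch{\Ext^{\bullet}_S(S/I_\x, S)}$ as a finite sum of characters $\ch{\Ext^{\bullet}_S(J_{\z, q_k - 1}, S)}$, where $q_k$ is the height of the column appended at the $k$-th step. The claimed formula then follows by a combinatorial identification: the index $p$ of the claimed formula corresponds to the column height $q_k$; the extra equality $z_q = z_1$ imposed by Lemma~\ref{lem:filtrationaddp+1} forces the additional parameter $t_{n-q+1}$ (in the indexing of Theorem~\ref{thm:Rpi*Mxp}) to take its maximal value $q-1$, which simultaneously accounts for the cohomological degree shift by $+1$ (since $mn - (q-1)^2 - 2(q-1) = mn + 1 - q^2$) and for the tightened bound $t_{n-p} \leq p-1$ in $W'(\x,p;\t,s)$; and varying $\z$ over all admissible choices (those with $z_{n+1-j} \geq x_{n+1-j}$) converts the equalities $\mu_{t_j+j} = t_j - z_{n+1-j} - m$ of Theorem~\ref{thm:Rpi*Mxp} into the inequalities $\ll_{t_j+j} \leq t_j - x_{n+1-j} - m$ defining $W'(\x,p;\t,s)$. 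The principal obstacle is the combinatorial bookkeeping verifying that every weight $\ll \in W'(\x,p;\t,s)$ is produced exactly once from this procedure and in the stated cohomological degree; in particular, one checks that non-corner indices (i.e.\ $p$ with $x_p = x_{p+1}$) contribute nothing (the constraints force $\ll_n$ both $\geq p - x_p - m$ and strictly below this, by dominance combined with the $j = n-p$ inequality), while for each corner $p = q^{(j)}$ the iteration steps with $q_k = p$ bijectively parametrize the range of the determined entry of $\ll$. This rigidity is exactly the "trivial extension problem" alluded to in the introduction, stemming from the rigidity of the $\GL(F)\times\GL(G)$-equivariant structure, which precludes any hidden cancellations between contributions.
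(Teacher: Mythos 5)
Your proposal matches the paper's proof essentially step for step: the same single-column induction driven by Theorem~\ref{thm:injectivityExt}, the same use of Lemma~\ref{lem:filtrationaddp+1} together with Corollary~\ref{cor:filtrationJxp} to express $\ch{\Ext^{\bullet}_S(I_{\x'}/I_{\x},S)}$ as a sum of $J_{\z,q-1}$-characters, and the same combinatorial identification (forcing $t_{n-q+1}=q-1$, producing the $+1$ cohomological shift, and converting the equalities of Theorem~\ref{thm:Rpi*Mxp} into the inequalities of $W'$ by varying $\z$). One small slip: your base-case argument that $W'(\ul{0},p;\t,s)=\emptyset$ invokes the $j=n-p$ instance of~(\ref{eq:restrllIx:b}), which is vacuous when $p=n$; the paper instead combines~(\ref{eq:restrllIx:a}) with~(\ref{eq:restrllIx:c}) to get $s\geq p$, contradicting $s\leq p-1$ uniformly in~$p$.
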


\begin{remark}\label{rem:xp>xp+1}
 The condition $t_{n-p}\leq p-1$ in (\ref{eq:characterExtIx}) combined with the inequalities
\[t_{n-p}-x_{p+1}-m\geq\ll_{t_{n-p}+n-p}\geq\ll_n\geq p-x_p-m\]
obtained from (\ref{eq:restrllIx:b}) by letting $j=n-p$, shows that the only values of $p$ for which there may be a non-trivial contribution to (\ref{eq:characterExtIx}) are the ones for which $x_p>x_{p+1}$, or $p=n$. It follows that for $x_1=\cdots=x_n$, the only interesting value of $p$ is $p=n$, in which case $I_{\x}=J_{\x,n}$ and~(\ref{eq:characterExtIx}) follows from~(\ref{eq:characterExtJxp}) and the standard long exact sequence relating $\Ext^{\bullet}_S(S/I_{\x},S)$ to $\Ext^{\bullet}_S(I_{\x},S)$.
\end{remark}

\begin{proof}[Proof of Theorem~\ref{thm:ExtIx}]
 We do induction on the number of columns of $\x$. When $\x=\ul{0}$, $S/I_{\x}=0$, so $\Ext^{\bullet}_S(S/I_{\x},S)=0$. It follows that (\ref{eq:characterExtIx}) is verified in this case, since $W'(\ul{0},p;\t,s)$ is empty whenever $0\leq s\leq p-1$: to see this, note that
\[s-m\overset{(\ref{eq:restrllIx:c})}{\geq}\ll_{s+1}\geq\ll_n\overset{(\ref{eq:restrllIx:a})}{\geq} p-m\]
implies $s\geq p$, which is incompatible with the condition $s\leq p-1$.
 
 Suppose now that $\y$ is obtained from $\x$ by appending a column of size $(q+1)$, for some $q=0,\cdots,n-1$. This implies that $x_1=\cdots=x_{q+1}$, and $y_i=x_i+1$ for $1\leq i\leq q+1$. It follows from Theorem~\ref{thm:injectivityExt} that
\begin{equation}\label{eq:chIy-chIx}
\ch{\Ext^{\bullet}_S(S/I_{\y},S)}=\ch{\Ext^{\bullet}_S(S/I_{\x},S)}+\ch{\Ext^{\bullet}_S(I_{\x}/I_{\y},S)}, 
\end{equation}
and from Lemma~\ref{lem:filtrationaddp+1} that
\begin{equation}\label{eq:Ix/Iy}
\ch{\Ext^{\bullet}_S(I_{\x}/I_{\y},S)}=\sum_{\substack{\z=(z_1\geq\cdots\geq z_n\geq 0) \\ z_1=\cdots=z_{q+1}=x_1 \\ z_i\geq x_i,\ i>q+1}}\ch{\Ext^{\bullet}_S(J_{\z,q},S)}. 
\end{equation}
By Remark~\ref{rem:xp>xp+1}, since $x_1=\cdots=x_{q+1}$ and $y_1=\cdots=y_{q+1}$, the only relevant terms in (\ref{eq:characterExtIx}) (for both $\x$ and $\y$) are those for which $p\geq q+1$. For such $p$, $x_{n+1-j}=y_{n+1-j}$ whenever $1\leq j\leq n-p$, so condition~(\ref{eq:restrllIx:b}) is the same for $\x$ as it is for $\y$. (\ref{eq:restrllIx:c}) is clearly the same for both $\x$ and $\y$, and the same is true for (\ref{eq:restrllIx:a}) when $p\geq q+2$. We conclude that $W'(\x,p;\t,s)=W'(\y,p;\t,s)$ for $p\neq q+1$, from which it follows using (\ref{eq:chIy-chIx}) and the induction hypothesis that in order to prove (\ref{eq:characterExtIx}) for $\y$, it suffices to show that
\begin{equation}\label{eq:Ix/Iy:1}
\begin{aligned}
\ch{\Ext^{\bullet}_S(I_{\x}/I_{\y},S)} &= \sum_{\substack{p=q+1 \\ 0\leq s\leq t_1\leq\cdots\leq t_{n-p}\leq p-1 \\ \ll\in W'(\y,p;\t,s)\setminus W'(\x,p;\t,s)}}[S_{\ll(s)}F\oo S_{\ll}G]\cdot z^{|\ll|}\cdot w^{m\cdot n+1-p^2-s\cdot(m-n)-2\cdot\left(\sum_{j=1}^{n-p}t_j\right)} \\
&= \sum_{\substack{0\leq s\leq t_1\leq\cdots\leq t_{n-q-1}\leq q \\ \ll\in W'(\y,q+1;\t,s)\setminus W'(\x,q+1;\t,s)}}[S_{\ll(s)}F\oo S_{\ll}G]\cdot z^{|\ll|}\cdot w^{m\cdot n-q^2-2\cdot q-s\cdot(m-n)-2\cdot\left(\sum_{j=1}^{n-q-1}t_j\right)}
\end{aligned}
\end{equation}
Note that since (\ref{eq:restrllIx:b}) and (\ref{eq:restrllIx:c}) are the same for $\x$ and $\y$ when $p=q+1$, it follows that 
\begin{equation}\label{eq:llinW'y-W'x}
 \ll\in W'(\y,q+1;\t,s)\setminus W'(\x,q+1;\t,s)\Leftrightarrow
\begin{cases}
 \ll_n=q+1-y_{q+1}-m=q-x_1-m, & \\
 \ll_{t_j+j}\leq t_j-x_{n+1-j}-m, \rm{ for } j=1,\cdots,n-q-1, & \\
 \ll_s\geq s-n\textrm{ and }\ll_{s+1}\leq s-m.
\end{cases}
\end{equation}

Consider now a partition $\z$ appearing in (\ref{eq:Ix/Iy}). We claim that $W(\z,q;\t,s)$ is empty unless $t_{n-q}=q$. Furthermore, if $\ll\in W(\z,q;\t,s)$, then $\ll_n=q-x_{1}-m$. To see this, note that
\[\ll_n\leq\ll_{t_{n-q}+n-q}\overset{(\ref{eq:restrllJxp:b})}{=}t_{n-q}-z_{q+1}-m=t_{n-q}-z_q-m\leq q-z_q-m\overset{(\ref{eq:restrllJxp:a})}{\leq}\ll_n,\]
which forces equalities throughout, and in particular 
\[t_{n-q}=q\rm{ and }\ll_n=t_{n-q}-z_{q+1}-m=q-x_1-m.\]
We get from (\ref{eq:characterExtJxp}) that
\begin{equation}\label{eq:charJzp}
\ch{\Ext^{\bullet}_S(J_{\z,q},S)}=\sum_{\substack{0\leq s\leq t_1\leq\cdots\leq t_{n-q-1}\leq t_{n-q}=q \\ \ll\in W(\x,q;\t,s)}}[S_{\ll(s)}F\oo S_{\ll}G]\cdot z^{|\ll|}\cdot w^{m\cdot n-q^2-2q-s\cdot(m-n)-2\cdot\left(\sum_{j=1}^{n-q-1}t_j\right)}.
\end{equation}
Combining (\ref{eq:Ix/Iy}), (\ref{eq:Ix/Iy:1}) and (\ref{eq:charJzp}), it remains to show that
\[W'(\y,q+1;\t,s)\setminus W'(\x,q+1;\t,s)=\bigcup_{\substack{\z=(z_1\geq\cdots\geq z_n\geq 0) \\ z_1=\cdots=z_{q+1}=x_1 \\ z_i\geq x_i,\ i>q+1}}W(\x,q;\t,s).\]
This follows immediately from (\ref{eq:llinW'y-W'x}) and the fact that the condition
\[\ll_{t_j+j}\leq t_j-x_{n+1-j}-m, \rm{ for } j=1,\cdots,n-q-1\]
in (\ref{eq:llinW'y-W'x}) is equivalent to the existence of a partition $\z$ satisfying $z_1=\cdots=z_{q+1}=x_1$, and $z_{n+1-j}\geq x_{n+1-j}$ for $j=1,\cdots,n-q-1$ (or equivalently $z_i\geq x_i$ for $i>q+1$), such that
\[\ll_{t_j+j}= t_j-z_{n+1-j}-m, \rm{ for } j=1,\cdots,n-q-1.\qedhere\]
\end{proof}

\section{Regularity of the ideals $I_{\x}$}\label{sec:regularity}

The explicit description of the character of $\Ext^{\bullet}_S(I_{\x},S)$ obtained in Theorem~\ref{thm:ExtIx} allows us to obtain the following result on the regularity of every ideal $I_{\x}$, whose proof will be the focus of the current section.

\begin{theorem}\label{thm:regularity}
For a partition $\ul{x}$ with at most $n$ parts, letting $x_{n+1}=-1$ we have the following formula for the regularity of the ideal $I_{\x}$:
\begin{equation}\label{eq:regIx}
\rm{reg}(I_{\ul{x}})=\max_{\substack{p=1,\cdots,n \\ x_{p}>x_{p+1}}}(n\cdot x_p+(p-2)\cdot(n-p)).
\end{equation}
In particular, the only ideals $I_{\x}$ which have a linear resolution are those for which $x_1=\cdots=x_n$ (i.e. powers $I_n^{x_1}$ of the ideal $I_n$ of maximal minors) or $x_1-1=x_2=\cdots=x_n$ (i.e. $I_n^{x_1-1}\cdot I_1$).
\end{theorem}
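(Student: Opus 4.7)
The plan is to convert the regularity of $I_\x$ into an extremal statement about the bigraded character of $\Ext^{\bullet}_S(S/I_\x,S)$ and then extract the formula from the explicit description in Theorem~\ref{thm:ExtIx}. First, graded local duality on the polynomial ring $S$ of Krull dimension $mn$, together with $\rm{reg}(I_\x)=\rm{reg}(S/I_\x)+1$, gives
\[\rm{reg}(I_\x)=1-\min\{i+j:\Ext^j_S(S/I_\x,S)_i\neq 0\}.\]
A surviving term in the character of Theorem~\ref{thm:ExtIx} contributes $i=|\ll|$ and $j=mn+1-p^{2}-s(m-n)-2\sum_{j}t_{j}$, so
\begin{equation*}
\rm{reg}(I_\x)=\max_{(p,s,\t,\ll)}\Bigl(p^{2}+s(m-n)+2\sum_{j=1}^{n-p}t_{j}-|\ll|-mn\Bigr),
\end{equation*}
the maximum over integer data $1\leq p\leq n$, $0\leq s\leq t_{1}\leq\cdots\leq t_{n-p}\leq p-1$, and $\ll\in W'(\x,p;\t,s)$. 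By Remark~\ref{rem:xp>xp+1} the outer maximum reduces to indices $p$ with $x_p>x_{p+1}$.

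For each such $p$ I claim the inner maximum equals $n x_p+(p-2)(n-p)$. Setting $t_j=p-1$ for all $j$ maximizes the $t$-contribution and is compatible with $\ll_{t_j+j}\leq t_j-x_{n+1-j}-m$ precisely because $x_p>x_{p+1}$ forces $x_p>x_{n+1-j}$ for $j=1,\ldots,n-p$. For fixed $s$, dominance, $\ll_n\geq p-x_p-m$, and $\ll_s\geq s-n$ jointly force $\ll_i\geq s-n$ for $i\leq s$ and $\ll_i\geq p-x_p-m$ for $i>s$; these lower bounds are simultaneously realizable (so that $W'(\x,p;\t,s)\neq\emptyset$) if and only if $s\geq\max(0,p-x_p)$, since otherwise $\ll_{s+1}\leq s-m$ is incompatible with $\ll_{s+1}\geq p-x_p-m$. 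Substituting $|\ll|_{\min}=s(s-n)+(n-s)(p-x_p-m)$ and cancelling yields the downward parabola
\begin{equation*}
E(s)=p^{2}+2(p-1)(n-p)-s^{2}-(n-s)(p-x_p),
\end{equation*}
whose vertex at $s=(p-x_p)/2$ lies at or below the admissible left endpoint $\max(0,p-x_p)$. Hence $E$ is maximized at that endpoint, and a direct calculation collapses $E(\max(0,p-x_p))$ to $n x_p+(p-2)(n-p)$, proving the regularity formula.

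For the linear-resolution assertion, $I_\x$ is generated in the single degree $|\x|$, so a linear resolution is equivalent to $\max_p(n x_p+(p-2)(n-p))=|\x|$. If $x_1>x_2$, the $p=1$ contribution yields $|\x|\geq n x_1-(n-1)$, and combined with $|\x|\leq x_1+(n-1)x_2\leq x_1+(n-1)(x_1-1)$ this forces equality throughout, whence $x_2=x_3=\cdots=x_n=x_1-1$. If $x_1=x_2$ and $q\geq 2$ is the smallest index with $x_q>x_{q+1}$, the inequality $n x_q+(q-2)(n-q)\leq|\x|\leq q x_q+(n-q)(x_q-1)$ simplifies to $(q-1)(n-q)\leq 0$, forcing $q=n$, so $\x=(x_n^n)$.

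The principal obstacle is the optimization in the second paragraph: pinning down the minimum of $|\ll|$ under the interlocking dominance and bound constraints of $W'(\x,p;\t,s)$, identifying the correct admissible range of $s$, and recognizing that the resulting quadratic in $s$ is maximized at a boundary point (not at its real vertex), with the boundary value simplifying cleanly to the advertised $n x_p+(p-2)(n-p)$.
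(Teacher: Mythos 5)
Your proof follows the same route as the paper: express $\rm{reg}(I_\x)$ via the $\Ext$-characterization of regularity, substitute the formula from Theorem~\ref{thm:ExtIx}, observe that taking $t_j=p-1$ for all $j$ is simultaneously optimal for the cohomological shift and for enlarging $W'$, identify the minimal weight in $W'(\x,p;\t,s)$, and maximize the resulting downward parabola in $s$ at the admissible left endpoint $\max(0,p-x_p)$. This is exactly what the paper does, organized as Lemma~\ref{lem:nonemptyW'} followed by the chain of equalities in (\ref{eq:regIx2}); your arithmetic agrees with theirs.

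There is one small gap worth flagging. Your claim ``for each such $p$ the inner maximum equals $nx_p+(p-2)(n-p)$'' silently assumes the admissible range $\max(0,p-x_p)\le s\le p-1$ is nonempty. For $p\le n-1$ the condition $x_p>x_{p+1}\ge 0$ forces $x_p\ge 1$ and the range is indeed nonempty; likewise for $p=n$ when $x_n\ge 1$. But when $p=n$ and $x_n=0$, we need $s\ge n$ and $s\le n-1$ at once, so there is no admissible $s$ and the inner maximum is taken over the empty set, not equal to $nx_n+(n-2)\cdot 0=0$. The paper handles this explicitly: when $x_n=0$ it drops the $p=n$ term from (\ref{eq:regIx2}) and then observes separately that $\rm{reg}(I_\x)\ge nx_n=0$, so re-including the value $0$ does not change the maximum. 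Your proof should add a sentence to the same effect (or note that if $\x\neq(0)$ and $x_n=0$, some $p<n$ with $x_p>x_{p+1}$ contributes a term $\ge 1>0$, so the degenerate $p=n$ term never governs the max). Your treatment of the linear-resolution characterization is correct and matches the paper's.
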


By \cite[Prop.~20.16]{eisCA}, one can compute the regularity of a finitely generated $S$--module $M$ by the formula
\begin{equation}\label{eq:regM}
\rm{reg}(M)=\max\{-r-j:\Ext^j_S(M,S)_r\neq 0\}. 
\end{equation}
Since $\rm{reg}(I_{\x})=\rm{reg}(S/I_{\x})+1$, we get by combining (\ref{eq:regM}) and (\ref{eq:characterExtIx}) that
\begin{equation}\label{eq:regIx1}
\rm{reg}(I_{\x})=\max_{\substack{1\leq p\leq n \\ 0\leq s\leq t_1\leq\cdots\leq t_{n-p}\leq p-1 \\ \ll\in W'(\x,p;\t,s)}}\left(-|\ll|-mn+p^2+s\cdot(m-n)+2\cdot\left(\sum_{j=1}^{n-p}t_j\right)\right). 
\end{equation}

It is then important to decide when $W'(\x,p;\t,s)$ is non-empty, which we do in the following lemma.

\begin{lemma}\label{lem:nonemptyW'}
 Fix $p\in\{1,\cdots,n\}$ and $0\leq s\leq t_1\leq\cdots\leq t_{n-p}\leq p-1$. The set $W'(\x,p;\t,s)$ is non-empty if and only if
\begin{equation}\label{eq:nonemptyconds}
\begin{cases}
 x_p-x_{n+1-j} \geq p-t_j,\rm{ for }j=1,\cdots,(n-p), & \\
 s\geq p-x_p.
\end{cases}
\end{equation}
Moreover, the weight $\ll\in W'(\x,p;\t,s)$ of minimal size (i.e. for which the quantity $-|\ll|$ is maximal) is given by
\begin{equation}\label{eq:minimizingll}
\begin{cases}
 \ll_1=\cdots=\ll_s=(s-n), & \\
 \ll_{s+1}=\cdots=\ll_n=(p-x_p-m).
\end{cases}
\end{equation}
\end{lemma}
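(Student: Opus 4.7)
The plan is a direct combinatorial verification from the defining inequalities~(\ref{eq:restrllIx:a})--(\ref{eq:restrllIx:c}), exploiting only dominance of $\ll$ and the assumption $m \geq n$. The argument splits into three short steps: necessity of~(\ref{eq:nonemptyconds}), explicit construction of a witness, and a ``termwise minimum'' lower bound showing the witness is optimal.

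\textbf{Step 1 (Necessity).} Suppose $W'(\x,p;\t,s) \neq \emptyset$ and pick any $\ll$ in it. Since $t_j + j \leq (p-1) + (n-p) = n-1 < n$, dominance gives $\ll_{t_j+j} \geq \ll_n$. Combining with~(\ref{eq:restrllIx:a}) and~(\ref{eq:restrllIx:b}),
\[
t_j - x_{n+1-j} - m \geq \ll_{t_j+j} \geq \ll_n \geq p - x_p - m,
\]
which rearranges to $x_p - x_{n+1-j} \geq p - t_j$. Similarly, since $s \leq p-1 < n$, dominance gives $\ll_{s+1} \geq \ll_n$, so combining (\ref{eq:restrllIx:c}) and (\ref{eq:restrllIx:a}),
\[
s - m \geq \ll_{s+1} \geq \ll_n \geq p - x_p - m,
\]
yielding $s \geq p - x_p$. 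This establishes~(\ref{eq:nonemptyconds}).

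\textbf{Step 2 (Sufficiency via explicit witness).} Assume (\ref{eq:nonemptyconds}) holds, and define $\ll$ by (\ref{eq:minimizingll}). Dominance amounts to $s - n \geq p - x_p - m$, i.e.\ $(s - (p-x_p)) + (m-n) \geq 0$, which follows from $s \geq p - x_p$ and $m \geq n$. For (\ref{eq:restrllIx:a}), $\ll_n = p - x_p - m$ is an equality. For (\ref{eq:restrllIx:c}), $\ll_s = s - n$ is an equality, while $\ll_{s+1} = p - x_p - m \leq s - m$ reduces exactly to $s \geq p - x_p$. For (\ref{eq:restrllIx:b}), note $t_j + j \geq s + 1$ (since $t_j \geq s$ and $j \geq 1$), so $\ll_{t_j+j} = p - x_p - m$, and the required inequality becomes $x_p - x_{n+1-j} \geq p - t_j$, again part of~(\ref{eq:nonemptyconds}). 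Thus $\ll \in W'(\x,p;\t,s)$.

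\textbf{Step 3 (Optimality of the witness).} For any $\ll' \in W'(\x,p;\t,s)$, dominance together with~(\ref{eq:restrllIx:a}) gives $\ll'_i \geq \ll'_n \geq p - x_p - m$ for all $i$, and dominance together with~(\ref{eq:restrllIx:c}) gives $\ll'_i \geq \ll'_s \geq s - n$ for $i \leq s$. Since $s - n \geq p - x_p - m$ (as shown above), summing these termwise lower bounds yields
\[
|\ll'| \geq s \cdot (s - n) + (n - s) \cdot (p - x_p - m) = |\ll|,
\]
with $\ll$ as in (\ref{eq:minimizingll}). Hence $\ll$ minimizes $|\ll'|$ on $W'(\x,p;\t,s)$, completing the proof.

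The only real obstacle is bookkeeping: one must check that the indices $t_j + j$, $s$, $s+1$, and $n$ all lie in $\{1,\ldots,n\}$ so the dominance chains actually apply, and that $m \geq n$ is used exactly once (to guarantee dominance of the witness~(\ref{eq:minimizingll})). Apart from these index checks, the argument is purely formal.
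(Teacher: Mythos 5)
Your proof is correct and follows essentially the same route as the paper's: necessity by chaining the defining inequalities with dominance and $\ll_n$, sufficiency by exhibiting the witness~(\ref{eq:minimizingll}) (the paper just says ``immediate to check'' where you spell out the index bookkeeping), and optimality by the same termwise lower bound $\ll_i \geq s-n$ for $i\leq s$ and $\ll_i\geq p-x_p-m$ for $i>s$. The extra care you take to verify that indices like $t_j+j$ lie in $\{1,\ldots,n\}$ and that $m\geq n$ is needed for dominance of the witness is a useful expansion of the paper's terse argument, but does not constitute a different approach.
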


\begin{proof}
 If $W'(\x,p;\t,s)$ is non-empty, then for any $\ll\in W'(\x,p;\t,s)$ we have
\[t_j-x_{n+1-j}-m\overset{(\ref{eq:restrllIx:b})}{\geq}\ll_{t_j+j}\geq\ll_n\overset{(\ref{eq:restrllIx:a})}{\geq} p-x_p-m,\]
for $j=1,\cdots,(n-p)$, and
\[s-m\overset{(\ref{eq:restrllIx:c})}{\geq}\ll_{s+1}\geq\ll_n\overset{(\ref{eq:restrllIx:a})}{\geq} p-x_p-m,\]
so (\ref{eq:nonemptyconds}) holds.

Conversely, assume that (\ref{eq:nonemptyconds}) holds, and define $\ll$ via (\ref{eq:minimizingll}). It is immediate to check that $\ll$ satisfies~(\ref{eq:restrllIx:a}--\ref{eq:restrllIx:c}), so $\ll\in W'(\x,p;\t,s)$ and the set is non-empty. The fact that this $\ll$ has minimal size follows from the fact that any other $\ll\in W'(\x,p;\t,s)$ is dominant and thus satisfies $\ll_1\geq\cdots\geq\ll_s\geq(s-n)$, and $\ll_{s+1}\geq\cdots\geq\ll_n\geq(p-x_p-m)$, so
\[|\ll|\geq s\cdot(s-n)+(n-s)\cdot(p-x_p-m)=(n-s)\cdot(p-x_p-s-m).\qedhere\]
\end{proof}

Lemma~\ref{lem:nonemptyW'} allows us to rewrite (\ref{eq:regIx1}) in the form
\begin{equation}\label{eq:regIx2}
\begin{aligned}
\rm{reg}(I_{\x}) & =\max_{\substack{1\leq p\leq n \\ 0\leq s\leq t_1\leq\cdots\leq t_{n-p}\leq p-1 \\ x_p-x_{n+1-j} \geq p-t_j \\ s\geq p-x_p}}\left(-(n-s)\cdot(p-x_p-s-m)-mn+p^2+s\cdot(m-n)+2\cdot\left(\sum_{j=1}^{n-p}t_j\right)\right) \\
& = \max_{\substack{1\leq p\leq n \\ 0\leq s\leq t_1\leq\cdots\leq t_{n-p}\leq p-1 \\ x_p-x_{n+1-j} \geq p-t_j \\ s\geq p-x_p}}\left(s\cdot(p-x_p-s)+n\cdot(x_p-p)+p^2+2\cdot\left(\sum_{j=1}^{n-p}t_j\right)\right) \\
& = \max_{\substack{1\leq p\leq n \\ 0\leq s\leq p-1 \\ x_p-x_{p+1} \geq 1 \\ s\geq p-x_p}}\left(s\cdot(p-x_p-s)+n\cdot x_p+(p-2)\cdot(n-p).\right)
\end{aligned}
\end{equation}

Since $s\geq p-x_p$, we have $s\cdot(p-x_p-s)\leq 0$, with equality if $s=0$ or $s=p-x_p$. For $1\leq p\leq n-1$, the condition $x_p-x_{p+1}\geq 1$ forces $x_p\geq 1$, so $p-x_p\leq p-1$. It follows that we can take $s=\max(0,p-x_p)$ in order to maximize the expression above. Likewise, if $p=n$ and $x_n\geq 1$, we take $s=\max(0,n-x_n)$. It follows that when $x_n\geq 1$, (\ref{eq:regIx2}) reduces to (\ref{eq:regIx}). However, if $x_n=0$ then for $p=n$ the conditions $s\leq p-1$ and $s\geq p-x_p$ are incompatible, so (\ref{eq:regIx2}) reduces to
\[\rm{reg}(I_{\x})=\max_{\substack{p=1,\cdots,n-1 \\ x_{p}>x_{p+1}}}(n\cdot(x_p-p)+p^2+2\cdot(p-1)\cdot(n-p)).\]
To see that this is the same as (\ref{eq:regIx}) it suffices to observe that $\rm{reg}(I_{\x})\geq nx_n=0$ (which is the term corresponding to $p=n$).

To finish the proof of the theorem, we need to verify the last assertion. Note that $I_{\x}$ is generated in degree $x_1+\cdots+x_n$, so it has a linear resolution if and only if 
\begin{equation}\label{eq:reg=gendeg}
\rm{reg}(I_{\x})=x_1+\cdots+x_n. 
\end{equation}
When $x_1=\cdots=x_n$, (\ref{eq:regIx}) reduces to the term with $p=n$, whose value is $nx_n=x_1+\cdots+x_n$. For $x_1-1=x_2=\cdots=x_n$, the only surviving terms in (\ref{eq:regIx}) are those with $p=1$ and $p=n$, so we get
\[\rm{reg}(I_{\x})=\max(n\cdot(x_1-1)+1,nx_n)=n\cdot(x_1-1)+1=x_1+\cdots+x_n.\]
Conversely, assume that (\ref{eq:reg=gendeg}) holds, and that the $x_i$'s aren't all equal. Take $p$ minimal with the property that $x_p>x_{p+1}$, so that $p<n$, $x_1=\cdots=x_p$ and $x_i\leq x_p-1$ for $i>p$. We have
\[\rm{reg}(I_{\x})\geq n\cdot(x_p-p)+p^2+2\cdot(p-1)\cdot(n-p)=px_p+(n-p)\cdot(x_p-1)+(n-p)\cdot(p-1)\geq x_1+\cdots+x_n,\]
with equality when $x_i=x_p-1$ for $i>p$ and $(n-p)\cdot(p-1)=0$. This forces $p=1$ and $x_1-1=x_2=\cdots=x_n$, concluding the proof of the theorem.

\section{Local cohomology with support in determinantal ideals}\label{sec:determinantal}

In this section we prove our main theorem on local cohomology with support in determinantal ideals. Recall that $S=\Sym(\bb{C}^m\oo\bb{C}^n)$ denotes the polynomial ring of functions on the space of $m\times n$ matrices, $I_p\subset S$ is the ideal of $p\times p$ minors of the generic $m\times n$ matrix, and $H_p(z,w)$ is the character of the doubly graded module $H^{\bullet}_{I_p}(S)$. Recall also the definition (\ref{eq:defhs}) of $h_s(z)$, and the notation (\ref{eq:gauss}) for Gauss polynomials.

\begin{theorem}\label{thm:loccohdetl}
 We have for each $p=1,\cdots,n,$
  \[H_p(z,w) = \sum_{s=0}^{p-1} h_s(z)\cdot w^{(n-p+1)^2+(n-s)\cdot(m-n)}\cdot{n-s-1 \choose p-s-1}(w^2).\]
\end{theorem}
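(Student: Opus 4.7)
The plan is to realize $H^{\bullet}_{I_p}(S)$ as a direct limit of $\Ext$-modules computed by Theorem~\ref{thm:ExtIx}, and then collapse the resulting sum. First I would use that $\sqrt{I_{(d^p)}}=I_p$ and that the descending system $\{I_{(d^p)}\}_d$ is cofinal with $\{I_p^d\}_d$ (standard for determinantal ideals), giving
\[H^j_{I_p}(S)=\varinjlim_d\Ext^j_S(S/I_{(d^p)},S).\]
Since $(d^p)$ consists of the first $d$ columns of $((d+1)^p)$, Theorem~\ref{thm:injectivityExt} makes the transition maps injective, so the character of the limit is the limit of characters.

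Next I would apply Theorem~\ref{thm:ExtIx} to $\x=(d^p)$. By Remark~\ref{rem:xp>xp+1}, only indices $p'$ with $x_{p'}>x_{p'+1}$ or $p'=n$ can contribute. The value $p'=p$ always appears; when $p<n$, the value $p'=n$ is excluded by Lemma~\ref{lem:nonemptyW'}, since the non-emptiness requirement $s\geq n-x_n=n$ conflicts with $s\leq n-1$. So only $p'=p$ contributes. Since $x_{n+1-j}=0$ for $j=1,\cdots,n-p$, the conditions defining $W'((d^p),p;\t,s)$ reduce to $\ll_n\geq p-d-m$, $\ll_{t_j+j}\leq t_j-m$ for $j=1,\cdots,n-p$, and $\ll_s\geq s-n$, $\ll_{s+1}\leq s-m$. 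In the limit $d\to\infty$, the first constraint evaporates.

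The crucial observation is that the intermediate constraints $\ll_{t_j+j}\leq t_j-m$ then become redundant: since $t_j\geq s$ and $j\geq 1$, dominance gives $\ll_{t_j+j}\leq\ll_{s+1}\leq s-m\leq t_j-m$. Consequently, for fixed $s$, the weight $\ll$ need only satisfy $\ll_s\geq s-n$ and $\ll_{s+1}\leq s-m$, which are exactly the conditions in~(\ref{eq:defhs}); the $\ll$-sum decouples and contributes $h_s(z)$, while the $\t$-sum factors out.

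Finally I would evaluate the $\t$-sum via the substitution $u_j=p-1-t_j$, which puts the weakly increasing tuples $s\leq t_1\leq\cdots\leq t_{n-p}\leq p-1$ in bijection with partitions $u_1\geq\cdots\geq u_{n-p}\geq 0$ in the $(n-p)\times(p-s-1)$ rectangle, and replaces $\sum t_j$ by $(n-p)(p-1)-\sum u_j$. A direct check gives
\[mn+1-p^2-s(m-n)-2(n-p)(p-1)=(n-p+1)^2+(n-s)(m-n),\]
so the $w$-exponent becomes $(n-p+1)^2+(n-s)(m-n)+2\sum u_j$, and~(\ref{eq:gauss}) identifies the $\t$-sum with $w^{(n-p+1)^2+(n-s)(m-n)}\cdot{n-s-1\choose p-s-1}(w^2)$. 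Summing over $s\in\{0,\cdots,p-1\}$ yields the stated formula. I expect the main obstacle to be the automatic-vanishing observation for the $t_j$-constraints, as this is the non-obvious conceptual step that causes the full sum to collapse into the clean product of $h_s(z)$ and a Gauss polynomial.
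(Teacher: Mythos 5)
Your proposal is correct and follows the paper's proof essentially verbatim: take the limit of $\Ext^{\bullet}_S(S/I_{(d^p)},S)$ using the injectivity from Theorem~\ref{thm:injectivityExt}, specialize Theorem~\ref{thm:ExtIx} to $\x=(d^p)$ with $d\to\infty$ so~(\ref{eq:restrllIx:a}) evaporates, observe via dominance that~(\ref{eq:restrllIx:c}) subsumes~(\ref{eq:restrllIx:b}), and reindex $t_j\mapsto p-1-t_j$ to produce the Gauss polynomial. You are in fact slightly more explicit than the paper in spelling out why only the summand $p'=p$ in Theorem~\ref{thm:ExtIx} contributes (ruling out $p'=n$ when $p<n$ via Lemma~\ref{lem:nonemptyW'}); the paper passes over this silently, so your added check is a welcome clarification rather than a divergence.
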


To prove the theorem, note that since the system of ideals $\{I_{(d^p)}:d\geq 0\}$ is cofinal with the one consisting of powers of the ideal of $p\times p$ minors, we obtain from \cite[Ex.~A1D.1]{eis-syzygies} that for each $i=0,1,\cdots,m\cdot n,$	 
\[H^i_{I_p}(S)=\varinjlim_{d}\Ext^i_S(S/I_{(d^p)},S),\]
where the successive maps in the directed system are induced from the natural quotient maps $S/I_{((d+1)^p)}\onto S/I_{(d^p)}$. By Theorem~\ref{thm:injectivityExt}, all these maps are injective, so the description of the character of $H^i_{I_p}(S)$ can be deduced from Theorem~\ref{thm:ExtIx}. Note that the partitions $\x$ to which we apply Theorem~\ref{thm:ExtIx} have the property that $x_1=\cdots=x_p=d$, and $x_i=0$ for $i>p$. Since we are interested in the limit as $d\to\infty$, we might as well assume that $x_1=\cdots=x_p=\infty$, in which case (\ref{eq:restrllIx:a}) becomes vacuous. In what follows, $\ll$ will always be assumed to be a dominant weight.

If $s\leq t_j$ then $s+1\leq t_j+j$ for every $j=1,\cdots,n-p,$ so we get
\[\ll_{t_j+j}\overset{(\ll\in\bb{Z}^n_{dom})}{\leq} \ll_{s+1}\overset{(\ref{eq:restrllIx:c})}{\leq} s-m\overset{(s\leq t_j)}{\leq} t_j-m,\]
i.e (\ref{eq:restrllIx:c}) implies (\ref{eq:restrllIx:b}) (note that $x_{n+1-j}=0$ for $j\leq n-p$). We conclude that
\[
\begin{split}
H_p(z,w)&=\sum_{\substack{0\leq s\leq t_1\leq\cdots\leq t_{n-p}\leq p-1 \\ \ll_s\geq s-n \\ \ll_{s+1}\leq s-m}}[S_{\ll(s)}F\oo S_{\ll}G]\cdot z^{|\ll|}\cdot w^{m\cdot n+1-p^2-s\cdot(m-n)-2\cdot\left(\sum_{j=1}^{n-p}t_j\right)} \\
\overset{(\ref{eq:defhs})}{=} & \sum_{0\leq s\leq t_1\leq\cdots\leq t_{n-p}\leq p-1}h_s(z)\cdot w^{m\cdot n+1-p^2-s\cdot(m-n)-2\cdot\left(\sum_{j=1}^{n-p}t_j\right)}\\
\overset{(t'_j:=p-1-t_j)}{=}& \sum_{s=0}^{p-1}h_s(z)\cdot w^{(n-p+1)^2+(n-s)\cdot(m-n)}\cdot\sum_{p-1-s\geq t'_1\geq\cdots\geq t'_{n-p}\geq 0}w^{2\cdot\left(\sum_{j=1}^{n-p}t'_j\right)} \\
\overset{(\ref{eq:gauss})}{=} & \sum_{s=0}^{p-1} h_s(z)\cdot w^{(n-p+1)^2+(n-s)\cdot(m-n)}\cdot {n-s-1 \choose p-s-1}(w^2).
\end{split}
\]

\section*{Acknowledgments} 
This work was initiated while we were visiting the Mathematical Sciences Research Institute, for whose hospitality we are grateful. Special thanks go to Emily Witt who participated in the initial stages of this project. We would also like to thank David Eisenbud, Steven Sam, Anurag Singh and Uli Walther for helpful conversations. Experiments with the computer algebra software Macaulay2 \cite{M2} have provided numerous valuable insights. The first author acknowledges the support of the National Science Foundation Grant No.~1303042. The second author acknowledges the support of the Alexander von Humboldt Foundation, and of the National Science Foundation Grant No.~0901185.


	\begin{bibdiv}
		\begin{biblist}

\bib{akin-weyman}{article}{
   author={Akin, Kaan},
   author={Weyman, Jerzy},
   title={Primary ideals associated to the linear strands of Lascoux's
   resolution and syzygies of the corresponding irreducible representations
   of the Lie superalgebra ${\bf gl}(m\vert n)$},
   journal={J. Algebra},
   volume={310},
   date={2007},
   number={2},
   pages={461--490},
   issn={0021-8693},
   review={\MR{2308168 (2009c:17007)}},
   doi={10.1016/j.jalgebra.2003.11.015},
}

\bib{bru-sch}{article}{
   author={Bruns, Winfried},
   author={Schw{\"a}nzl, Roland},
   title={The number of equations defining a determinantal variety},
   journal={Bull. London Math. Soc.},
   volume={22},
   date={1990},
   number={5},
   pages={439--445},
   issn={0024-6093},
   review={\MR{1082012 (91k:14035)}},
   doi={10.1112/blms/22.5.439},
}

\bib{bruns-vetter}{book}{
   author={Bruns, Winfried},
   author={Vetter, Udo},
   title={Determinantal rings},
   series={Lecture Notes in Mathematics},
   volume={1327},
   publisher={Springer-Verlag},
   place={Berlin},
   date={1988},
   pages={viii+236},
   isbn={3-540-19468-1},
   review={\MR{953963 (89i:13001)}},
}

\bib{deconcini-eisenbud-procesi}{article}{
   author={de Concini, C.},
   author={Eisenbud, David},
   author={Procesi, C.},
   title={Young diagrams and determinantal varieties},
   journal={Invent. Math.},
   volume={56},
   date={1980},
   number={2},
   pages={129--165},
   issn={0020-9910},
   review={\MR{558865 (81m:14034)}},
   doi={10.1007/BF01392548},
}

\bib{eisCA}{book}{
   author={Eisenbud, David},
   title={Commutative algebra},
   series={Graduate Texts in Mathematics},
   volume={150},
   note={With a view toward algebraic geometry},
   publisher={Springer-Verlag},
   place={New York},
   date={1995},
   pages={xvi+785},
   isbn={0-387-94268-8},
   isbn={0-387-94269-6},
   review={\MR{1322960 (97a:13001)}},
   doi={10.1007/978-1-4612-5350-1},
}

\bib{eis-syzygies}{book}{
   author={Eisenbud, David},
   title={The geometry of syzygies},
   series={Graduate Texts in Mathematics},
   volume={229},
   note={A second course in commutative algebra and algebraic geometry},
   publisher={Springer-Verlag},
   place={New York},
   date={2005},
   pages={xvi+243},
   isbn={0-387-22215-4},
   review={\MR{2103875 (2005h:13021)}},
}

\bib{ful-har}{book}{
   author={Fulton, William},
   author={Harris, Joe},
   title={Representation theory},
   series={Graduate Texts in Mathematics},
   volume={129},
   note={A first course;
   Readings in Mathematics},
   publisher={Springer-Verlag},
   place={New York},
   date={1991},
   pages={xvi+551},
   isbn={0-387-97527-6},
   isbn={0-387-97495-4},
   review={\MR{1153249 (93a:20069)}},
}

\bib{hochster-eagon}{article}{
   author={Hochster, M.},
   author={Eagon, John A.},
   title={Cohen-Macaulay rings, invariant theory, and the generic perfection
   of determinantal loci},
   journal={Amer. J. Math.},
   volume={93},
   date={1971},
   pages={1020--1058},
   issn={0002-9327},
   review={\MR{0302643 (46 \#1787)}},
}

\bib{M2}{article}{
          author = {Grayson, Daniel R.},
          author = {Stillman, Michael E.},
          title = {Macaulay 2, a software system for research
                   in algebraic geometry},
          journal = {Available at \url{http://www.math.uiuc.edu/Macaulay2/}}
        }

\bib{hartshorne}{book}{
   author={Hartshorne, Robin},
   title={Algebraic geometry},
   note={Graduate Texts in Mathematics, No. 52},
   publisher={Springer-Verlag},
   place={New York},
   date={1977},
   pages={xvi+496},
   isbn={0-387-90244-9},
   review={\MR{0463157 (57 \#3116)}},
}

\bib{lyubeznik-singh-walther}{article}{
   author={Lyubeznik, Gennady},
   author={Singh, Anurag},
   author={Walther, Uli},
   title={Local cohomology modules supported at determinantal ideals},
   journal = {arXiv},
   number = {1308.4182},
   date={2013}
}

\bib{peskine-szpiro}{article}{
   author={Peskine, C.},
   author={Szpiro, L.},
   title={Dimension projective finie et cohomologie locale. Applications \`a
   la d\'emonstration de conjectures de M. Auslander, H. Bass et A.
   Grothendieck},
   language={French},
   journal={Inst. Hautes \'Etudes Sci. Publ. Math.},
   number={42},
   date={1973},
   pages={47--119},
   issn={0073-8301},
   review={\MR{0374130 (51 \#10330)}},
}

\bib{RWW}{article}{
   author={Raicu, Claudiu},
   author={Weyman, Jerzy},
   author={Witt, Emily E.},
   title={Local cohomology with support in ideals of maximal minors and sub-maximal Pfaffians},
   journal = {arXiv},
   number = {1305.1719},
   date={2013}
}

\bib{weyman}{book}{
   author={Weyman, Jerzy},
   title={Cohomology of vector bundles and syzygies},
   series={Cambridge Tracts in Mathematics},
   volume={149},
   publisher={Cambridge University Press},
   place={Cambridge},
   date={2003},
   pages={xiv+371},
   isbn={0-521-62197-6},
   review={\MR{1988690 (2004d:13020)}},
   doi={10.1017/CBO9780511546556},
}

		\end{biblist}
	\end{bibdiv}

\end{document}